\documentclass[11pt]{amsart}

\usepackage{amsfonts}
\usepackage{amscd}
\usepackage{epsfig}
\usepackage{epic,eepic}
\usepackage{graphicx}
\usepackage[all]{xy}
\usepackage{xypic}
\usepackage{psfrag}
\usepackage{amsmath}
\usepackage{amsthm}
\usepackage{setspace}
\usepackage{here}

\theoremstyle{plain}
\newtheorem{theorem}{Theorem}[section]
\newtheorem{lemma}[theorem]{Lemma}

\newtheorem{corollary}[theorem]{Corollary}

\theoremstyle{definition}
\newtheorem{remark}[theorem]{Remark}

\newtheorem{example}[theorem]{Example}

\setlength{\oddsidemargin}{0.5cm} \setlength{\evensidemargin}{0.5cm}
\setlength{\textwidth}{15.5cm} \setlength{\topmargin}{-0.2cm}
\setlength{\textheight}{21cm}
\setlength{\parskip}{.5mm}

\newcommand{\Mg}{\overline{\mathcal M}_{g}}
\newcommand{\PP}{\mathbb P}
\newcommand{\Mof}{\overline{M}_{0,4}}
\newcommand{\Hdg}{\mathcal H_d({\bf c})}
\newcommand{\BHdg}{\overline{\mathcal H}_d({\bf c})}
\newcommand{\Ndg}{N_d({\bf c})}
\newcommand{\gof}{\gamma_1, \gamma_2, \gamma_3, \gamma_4}
\newcommand{\prodgof}{\gamma_1\gamma_2\gamma_3\gamma_4}
\newcommand{\OO}{\mathcal O}
\newcommand{\QQ}{\mathcal Q}
\newcommand{\HH}{\mathcal H}
\newcommand{\ZZ}{\overline{Z}_{\OO}}
\newcommand{\RR}{\mathbb R}

\title{Covers of the projective line and the
moduli space of quadratic differentials}

\author{Dawei Chen}

\address{Department of Mathematics, Statistics and Computer Science, University of Illinois at
  Chicago, Chicago, IL 60607}

\email{dwchen@math.uic.edu}

\begin{document}
\bibliographystyle{plain}

\begin{abstract}
Consider the 1-dimensional Hurwitz space parameterizing covers of $\PP^1$ branched at four points. We study its intersection with divisor classes on the moduli space of curves. As an application, we calculate the slope of the Teichm\"{u}ller curve parameterizing square-tiled cyclic covers and recover the sum of its Lyapunov exponents obtained by Forni, Matheus and Zorich \cite{FMZ}. Motivated by the work of Eskin, Kontsevich and Zorich \cite{EKZ1}, we exhibit a relation among the slope of Hurwitz spaces, the sum of Lyapunov exponents and the Siegel-Veech constant for the moduli space of quadratic differentials. 
\end{abstract}

\maketitle
\tableofcontents

\section{Introduction}
Let $\Mg$ denote the moduli space of stable genus $g$ curves. The geometry of $\Mg$ can be revealed by studying 1-dimensional families of genus $g$ curves. A useful construction of such families arises from branch covers. For instance, Cornalba and Harris \cite{CH} studied 1-dimensional families of hyperelliptic curves, which have very special numerical classes. Stankova \cite{S} generalized the result to families of trigonal curves. Along a slightly different direction, Harris and Morrison \cite{HM1} studied 1-dimensional families of degree $d\gg 0$, simply branched covers of $\PP^1$, via which they obtained lower bounds for the slope of effective divisors on $\Mg$. In \cite{C1}, families of covers of elliptic curves were studied for the same purpose. In addition, 1-dimensional families of covers of elliptic curves with a unique branch point, also called arithmetic Teichm\"{u}ller curves, have significance in complex dynamics. In \cite{C2}, a relation among their slopes and the sum of Lyapunov exponents of the Hodge bundle on the moduli space of Abelian differentials was established. 

In this paper, we consider covers of $\PP^1$ branched at four points of arbitrary ramification type. Varying a branch point, we get a 1-dimensional Hurwitz space parameterizing such covers. Let us first summarize the main results. In Theorem~\ref{monodromy}, we give a criterion of distinguishing irreducible components of the Hurwitz space. In Theorem~\ref{slope}, we analyze its intersection with divisor classes on $\Mg$ and derive a slope formula. We apply the formula to the Hurwitz space of cyclic covers in Theorem~\ref{cyclic} and recover the sum of 
its Lyapunov exponents in Theorem~\ref{Lcyclic}, which was first obtained by Forni, Matheus and Zorich \cite{FMZ}. Building on the work of Eskin, Kontsevich and Zorich \cite{EKZ1}, we find a relation among the slope, the Siegel-Veech constant and the sum of Lyapunov exponents for the moduli space of quadratic differentials. Aiming for understanding the effective cone of $\Mg$, at the end we discuss some interesting interplay between $\Mg$ and the moduli space of quadratic differentials. Throughout the paper, we work over the complex number field $\mathbb C$. 

Let $p_1, p_2, p_3, p_4$ be four distinct points on $\PP^1$ and $c_1, c_2, c_3, c_4$ four conjugacy classes of the permutation group $S_d$ such that the ramification type over $p_i$ corresponds to $c_i$ for $1\leq i\leq 4$. Fix the ramification profile
$${\bf c} = (c_1, c_2, c_3, c_4).$$ 
Suppose $c_i$ consists of $k_i$ cycles, each of which has length $a_{i,j}$ for $1\leq j \leq k_i$, where $\sum_{j=1}^{k_i} a_{i,j} = d$, $1\leq i\leq 4$. The genus $g$ of connected covers with the ramification profile ${\bf c}$  is determined by the Riemann-Hurwitz formula 
$g= d + 1 - \frac{1}{2}\sum_{i=1}^4 k_i.$ 

Fix $p_1, p_2, p_3$ and vary $p_4$ along $\PP^1$. We obtain a 1-dimensional Hurwitz space $\Hdg$ of such covers. 
When $p_4$ meets $p_i$, $1\leq i\leq 3$, we get degenerate covers between nodal curves in the sense of admissible covers. Let $\BHdg$ denote the compactification of $\Hdg$ parameterizing degree $d$, connected admissible covers with the ramification profile ${\bf c}$, cf. \cite[3.G]{HM2}. There are two natural morphisms 
$$\xymatrix{
\BHdg \ar[r]^{h} \ar[d]^{e}  &  \Mg  \\
 \Mof            & }
$$ 
The morphism $h$ sends a genus $g$ cover to the stabilization of its domain curve parameterized in $\Mg$. The morphism $e$ sends a cover to its image rational curve marked at the four branch points paramterized in $\Mof\cong \PP^1$. Note that $e$ restricted to $\Hdg$ is an unramified cover of degree $\Ndg$, where $\Ndg$ equals the number of non-isomorphic covers of
$\PP^1$ with four fixed branch points and the ramification profile ${\bf c}$. 

Let $\pi_1(\PP^1, p_1, p_2, p_3, p_4 ; b)$ denote the fundamental group of $\PP^1$ punctured at the four branch points with a base point $b$. 
Let $\alpha_i$ denote a closed oriented path circular around $p_i$ for $1\leq i\leq 4$ such that $\alpha_1\alpha_2\alpha_3\alpha_4 = id$ in $\pi_1(\PP^1, p_1, p_2, p_3, p_4 ; b)$. See the following picture.

\begin{figure}[H]
    \centering
    \psfrag{p1}{$p_{1}$}
    \psfrag{p2}{$p_{2}$}
    \psfrag{p3}{$p_{3}$}
    \psfrag{p4}{$p_{4}$}
    \psfrag{b}{$b$}
    \psfrag{a1}{$\alpha_{1}$}
    \psfrag{a2}{$\alpha_{2}$}
    \psfrag{a3}{$\alpha_{3}$}
    \psfrag{a4}{$\alpha_{4}$}
    \includegraphics[scale=0.5]{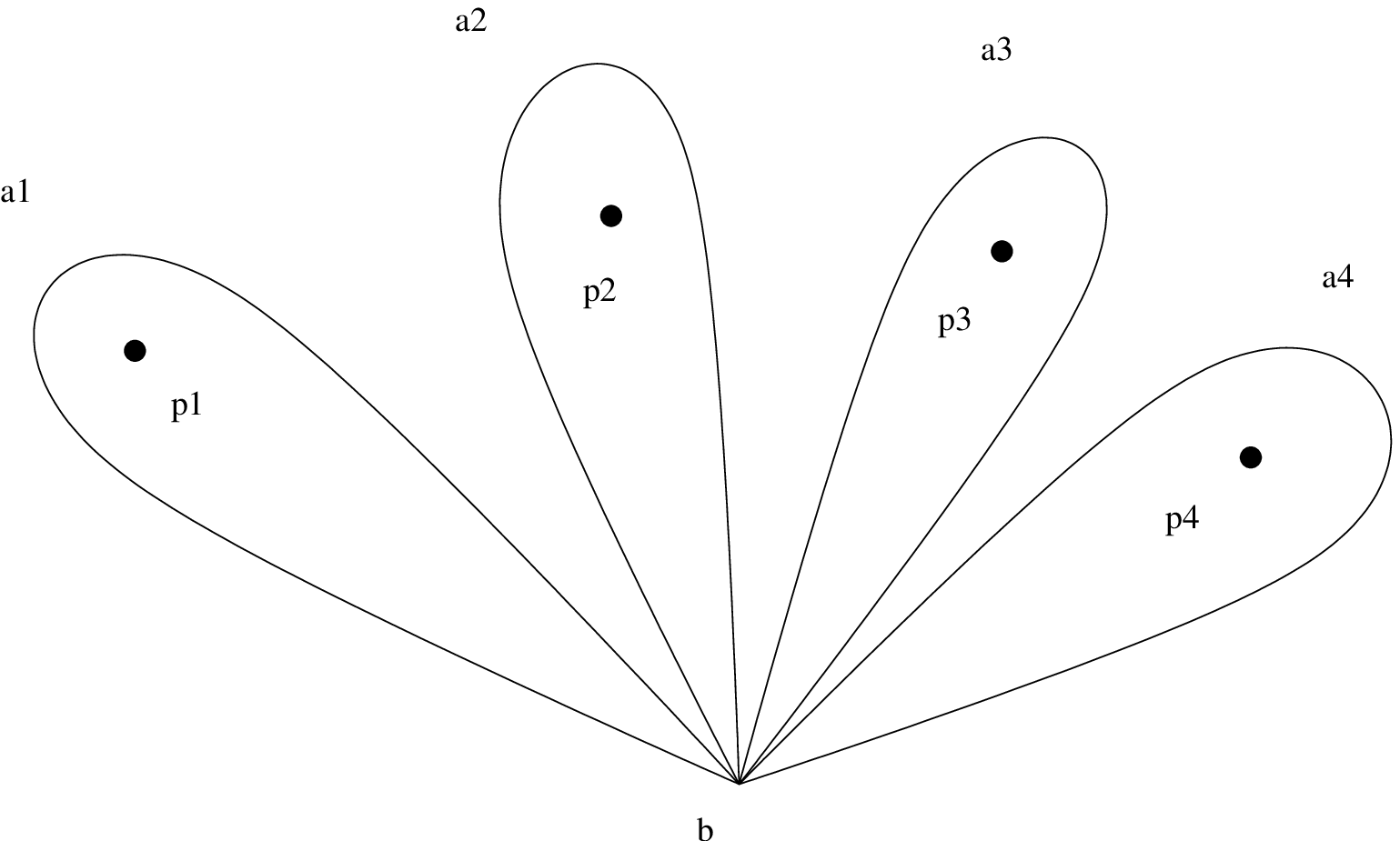}
    \end{figure}

Label the $d$ sheets of a cover by $1,\ldots, d$. Then a cover parameterized in $\Hdg$ corresponds to an element in $\mbox{Hom}(\pi_1(\PP^1, p_1, p_2, p_3, p_4 ; b), S_d)$. 
Let $\gamma_i$ denote the monodromy image of $\alpha_i$ in $S_d$ for $1\leq i \leq 4$ and call ${\bf r} = (\gof)$ the monodromy data of the corresponding cover.  
Define the following set of equivalence classes
$$Cov_d({\bf c}) = \{ {\bf r}=  (\gof)\ |\ \gamma_i \in c_i, \ \prodgof = id,  $$
$$  \langle \gof \rangle \ \mbox{is a transitive subgroup of}\ S_d   \}/\sim. $$
The equivalence relation $\sim$ is defined for two data $\bf r \sim \bf r'$ if there exists $\tau\in S_d$ such that 
$$\tau (\gof) \tau^{-1} = (\gamma_1', \gamma_2', \gamma_3', \gamma_4').$$ 
Two covers corresponding to ${\bf r}$ and ${\bf r'}$ are isomorphic if and only if ${\bf r} \sim {\bf r'}$, where the conjugate action by $\tau$ amounts to relabeling the $d$ sheets of a cover. The transitivity guarantees that the covers are connected. Hence, the set of non-isomorphic covers of $\PP^1$ with four fixed branch points and the ramification profile ${\bf c}$ can be identified as $Cov_d({\bf c}) $. Namely, a fiber of the finite morphism
$e: \Hdg\rightarrow M_{0,4} \cong \PP^1\backslash\{p_1,p_2,p_3 \}$ is parameterized by $Cov_d({\bf c}) $ and $e$ has degree $\Ndg = |Cov_d({\bf c}) |.$ 

The Hurwitz space $\BHdg$ may be reducible. Since $e$ restricted to $\Hdg$ is unramified, the fundamental group $\pi_1(M_{0,4} ; b')$ acts on $Cov_d({\bf c}) $ and each orbit corresponds to an irreducible component of $\Hdg$. Let $\beta_1$ and $\beta_2$ denote two closed oriented paths around $p_3$ and $p_2, p_3$,  respectively, as in the following picture. 

\begin{figure}[H]
    \centering
    \psfrag{p1}{$p_{1}$}
    \psfrag{p2}{$p_{2}$}
    \psfrag{p3}{$p_{3}$}
    \psfrag{b}{$b'$}
    \psfrag{a1}{$\beta_{1}$}
    \psfrag{a2}{$\beta_{2}$}
    \includegraphics[scale=0.5]{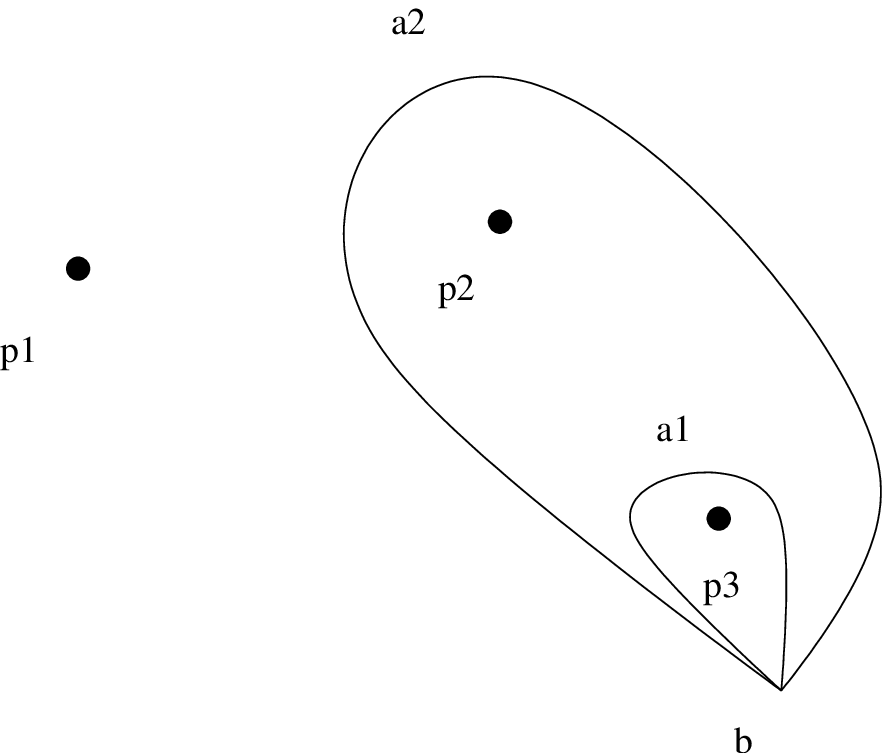}
    \end{figure}

Let $g_i$ denote the action on $Cov_d({\bf c}) $ induced by $\beta_i$ for $i = 1,2$. 

\begin{theorem}
\label{monodromy}
Let ${\bf r} = (\gof)$ be a representative of an equivalence class in $Cov_d({\bf c}) $. We have
$$ g_1 ({\bf r}) = (\gamma_1, \gamma_2, \gamma_4^{-1} \gamma_3 \gamma_4, (\gamma_3\gamma_4)^{-1} \gamma_4(\gamma_3\gamma_4)), $$
$$g_2({\bf r}) = (\gamma_1, \gamma_4^{-1}\gamma_2\gamma_4, \gamma_4^{-1}\gamma_3\gamma_4, (\gamma_2\gamma_3\gamma_4)^{-1} \gamma_4(\gamma_2\gamma_3\gamma_4)). $$
Two covers are parameterized in the same irreducible component of $\Hdg$ if and only if their monodromy data are in the same orbit under the actions generated by $g_1, g_2$. 
\end{theorem}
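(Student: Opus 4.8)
The plan is to identify the irreducible components of $\Hdg$ with the orbits of the monodromy action of $\pi_1(M_{0,4};b')$ on the fiber $Cov_d({\bf c})$, and then to compute the two generating braid moves $g_1,g_2$ explicitly. Since $e|_{\Hdg}\colon \Hdg\to M_{0,4}$ is a finite unramified cover of degree $\Ndg$ and $\Hdg$ is smooth, its irreducible components coincide with its connected components, which by covering space theory are in bijection with the orbits of the monodromy action of $\pi_1(M_{0,4};b')$ on the fiber $Cov_d({\bf c})$. As $M_{0,4}\cong \PP^1\setminus\{p_1,p_2,p_3\}$ is a thrice-punctured sphere, $\pi_1(M_{0,4};b')$ is free of rank two; the two loops $\beta_1,\beta_2$ drawn above, being (isotopic to) simple loops around two distinct punctures, freely generate it. Hence the action is generated by $g_1=(\beta_1)_*$ and $g_2=(\beta_2)_*$, and two covers lie in the same component if and only if their data lie in the same $\langle g_1,g_2\rangle$-orbit. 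This reduces the statement to computing $g_1$ and $g_2$.

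Next I would make precise how moving $p_4$ acts. As $p_4$ traverses a loop $\beta$ and returns to its starting position, the admissible cover is carried along, and at the end its monodromy must be recomputed with respect to the original cut system $\alpha_1,\dots,\alpha_4$. Equivalently, $\beta$ determines the point-pushing homeomorphism $\phi_\beta$ of the punctured sphere dragging $p_4$ once along $\beta$, and the transformed data is $\gamma_i'=\rho\big((\phi_\beta)_*\alpha_i\big)$, where $\rho\colon\alpha_i\mapsto\gamma_i$ is the monodromy representation on $\pi_1(\PP^1\setminus\{p_1,\dots,p_4\};b)=\langle \gof \mid \prodgof = id\rangle$. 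Thus the whole computation takes place in this free-on-three-generators group, and $g_1,g_2$ are read off the induced automorphisms $(\phi_{\beta_1})_*,(\phi_{\beta_2})_*$.

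I would then evaluate these automorphisms by a direct isotopy, tracking the deformation in the two figures. Pushing $p_4$ once counterclockwise around $p_3$ leaves $\alpha_1,\alpha_2$ fixed, drags $\alpha_3$ across $p_4$ so that $\alpha_3\mapsto \alpha_4^{-1}\alpha_3\alpha_4$, and carries $\alpha_4$ around $p_3$ so that $\alpha_4\mapsto(\alpha_3\alpha_4)^{-1}\alpha_4(\alpha_3\alpha_4)$; applying $\rho$ yields the stated $g_1$. Pushing $p_4$ around the pair $\{p_2,p_3\}$ conjugates both enclosed loops by $\alpha_4$, giving $\alpha_2\mapsto\alpha_4^{-1}\alpha_2\alpha_4$ and $\alpha_3\mapsto\alpha_4^{-1}\alpha_3\alpha_4$, while $\alpha_4\mapsto(\alpha_2\alpha_3\alpha_4)^{-1}\alpha_4(\alpha_2\alpha_3\alpha_4)$, producing $g_2$. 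As a built-in sanity check, each formula should preserve the conjugacy classes $c_i$ and the relation $\prodgof=id$, and I would verify both directly.

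The main obstacle is this last step: extracting the conjugating factors and their precise order from the isotopy, since the answer is sensitive to the orientation of the loops, the choice of base point, and the convention relating a loop in the base to the direction of the induced braid move. I would fix these by following a single regular fiber through the homotopy depicted in the figures, and then confirm the result against the two invariance checks above, which rigidly constrain the conjugators and leave no ambiguity once the raw move is computed.
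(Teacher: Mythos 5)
Your proposal is correct and follows essentially the same route as the paper: the author likewise identifies irreducible components of $\Hdg$ with orbits of the $\pi_1(M_{0,4};b')$-action on $Cov_d({\bf c})$ via the unramified cover $e$, with $\beta_1,\beta_2$ generating, and computes $g_1,g_2$ by tracking the deformation of the loops $\alpha_1,\dots,\alpha_4$ as $p_4$ is dragged once around each $\beta_i$, arriving at exactly your formulas. Your added care about point-pushing conventions and the two invariance checks (preservation of the $c_i$ and of $\prodgof=id$, the latter amounting to the well-definedness the paper notes in one line) is sound but not a different argument.
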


\begin{proof}
Recall that $\alpha_i$ denotes a closed oriented path around $p_i$ for $1\leq i\leq 4$ such that $\alpha_1\alpha_2\alpha_3\alpha_4 = id$ in $\pi_1(\PP^1, p_1, p_2, p_3, p_4 ; b)$.

Let the moving branch point $p_4$ go around the path $\beta_1$ once. The resulting paths have the following expression 
$$(\alpha'_1, \alpha'_2, \alpha'_3, \alpha'_4) = (\alpha_1, \alpha_2, \alpha_4^{-1} \alpha_3 \alpha_4, (\alpha_3\alpha_4)^{-1} \alpha_4(\alpha_3\alpha_4)) $$
by the original paths, hence the corresponding action $g_1$ on the monodromy data $(\gof)$ has the same expression. See the following picture. 

\begin{figure}[H]
    \centering
    \psfrag{p1}{$p_{1}$}
    \psfrag{p2}{$p_{2}$}
    \psfrag{p3}{$p_{3}$}
    \psfrag{p4}{$p_{4}$}
    \psfrag{b}{$b$}
    \psfrag{a1}{$\alpha'_{1}$}
    \psfrag{a2}{$\alpha'_{2}$}
    \psfrag{a3}{$\alpha'_{3}$}
    \psfrag{a4}{$\alpha'_{4}$}
    \psfrag{c1}{$\beta_1$}
    \includegraphics[scale=0.5]{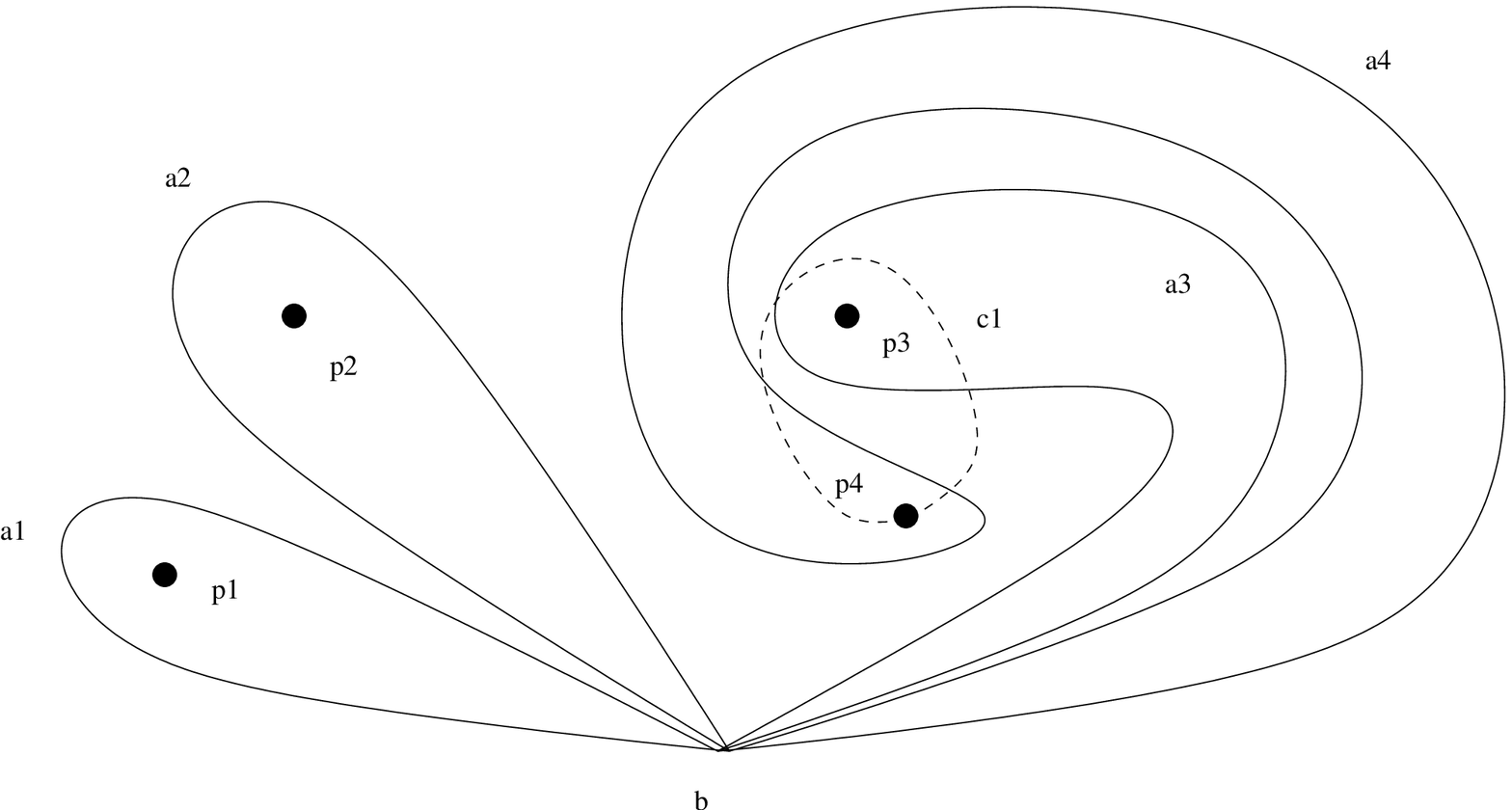}
    \end{figure}

Let the moving branch point $p_4$ go around the path $\beta_2$ once. The resulting paths have the following expression 
$$(\alpha'_1, \alpha'_2, \alpha'_3, \alpha'_4) = (\alpha_1, \alpha_4^{-1}\alpha_2\alpha_4, \alpha_4^{-1}\alpha_3\alpha_4, (\alpha_2\alpha_3\alpha_4)^{-1} \alpha_4(\alpha_2\alpha_3\alpha_4)) $$
by the original paths, hence the corresponding action $g_2$ on the monodromy data $(\gof)$ has the same expression. See the following picture. 

\begin{figure}[H]
    \centering
    \psfrag{p1}{$p_{1}$}
    \psfrag{p2}{$p_{2}$}
    \psfrag{p3}{$p_{3}$}
    \psfrag{p4}{$p_{4}$}
    \psfrag{b}{$b$}
    \psfrag{a1}{$\alpha'_{1}$}
    \psfrag{a2}{$\alpha'_{2}$}
    \psfrag{a3}{$\alpha'_{3}$}
    \psfrag{a4}{$\alpha'_{4}$}
    \psfrag{c2}{$\beta_2$}
    \includegraphics[scale=0.5]{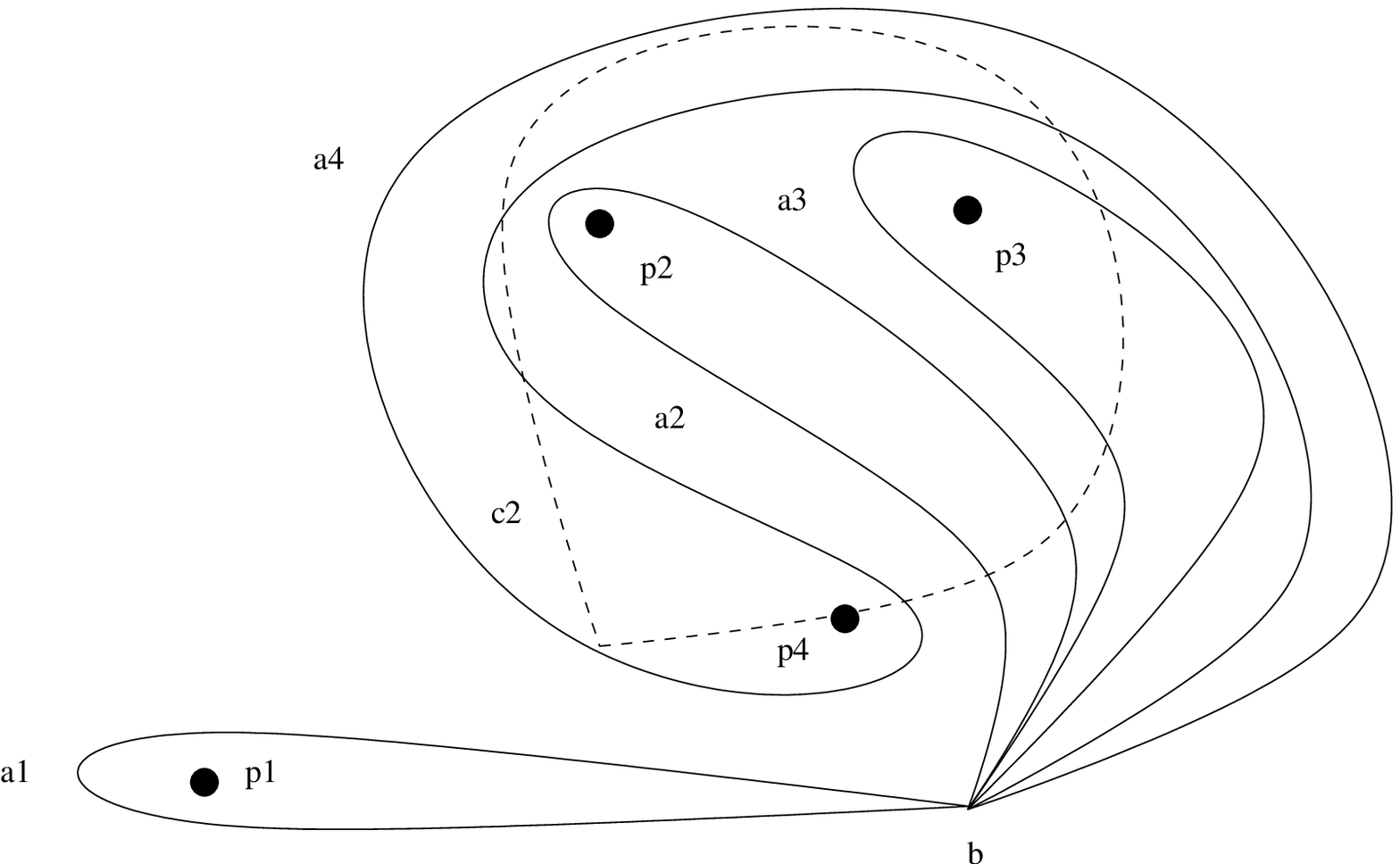}
    \end{figure}

Since $\Hdg\rightarrow M_{0,4}\cong\PP^1 \backslash \{p_1, p_2, p_3\}$ is unramified and $\pi_1(\PP^1, p_1, p_2, p_3 ; b')$ is generated by $\beta_1, \beta_2$, the irreducible components 
of $\Hdg$ correspond to the orbits of  $Cov_d({\bf c}) $ under the actions generated by $g_1, g_2$. Note that these actions are well-defined with respect to the equivalence relation $\sim$. 
\end{proof}

Let $\OO\subset Cov_d({\bf c}) $ denote an orbit of the above actions. 
Let $Z_{\OO}$ be the corresponding irreducible component of $\Hdg$ and $\ZZ$ be its closure 
in $\BHdg$. The morphism $h$ maps $\ZZ$ to an irreducible curve in $\Mg$. Let $\lambda$ denote the first Chern class of the Hodge bundle and $\delta$ be the total boundary class of $\Mg$. Define the slope of $\ZZ$ as 
$$s(\ZZ) = \frac{\mbox{deg} \ h^{*}\delta |_{\ZZ}}{\mbox{deg} \ h^{*}\lambda |_{\ZZ}}.$$ 
The slope is invariant under a finite base change. 

In order to calculate $s(\ZZ)$, we need to analyze singular admissible covers that arise in $\ZZ$. Suppose a smooth cover $\pi$ corresponding to ${\bf r}$ degenerates to a singular cover $\pi_0$ when the moving branch point $p_4$ approaches $p_3$. The image of $\pi_0$ is a nodal union of two smooth rational curves $Q_{12}\cup_{p_0} Q_{34}$ with $p_0, p_1, p_2$ on $Q_{12}$ and $p_0, p_3, p_4$ on $Q_{34}$. Locally there is a vanishing cycle $\alpha_0$ that shrinks to the node $p_0$. See the following picture. 

\begin{figure}[H]
    \centering
    \psfrag{p1}{$p_{2}$}
    \psfrag{p2}{$p_{1}$}
    \psfrag{p3}{$p_{3}$}
    \psfrag{p4}{$p_{4}$}
    \psfrag{a1}{$\alpha_{2}$}
    \psfrag{a2}{$\alpha_{1}$}
    \psfrag{a3}{$\alpha_{3}$}
    \psfrag{a4}{$\alpha_{4}$}
    \psfrag{a0}{$\alpha_0$}
    \psfrag{Q12}{$Q_{12}$}
    \psfrag{Q34}{$Q_{34}$}
    \includegraphics[scale=0.5]{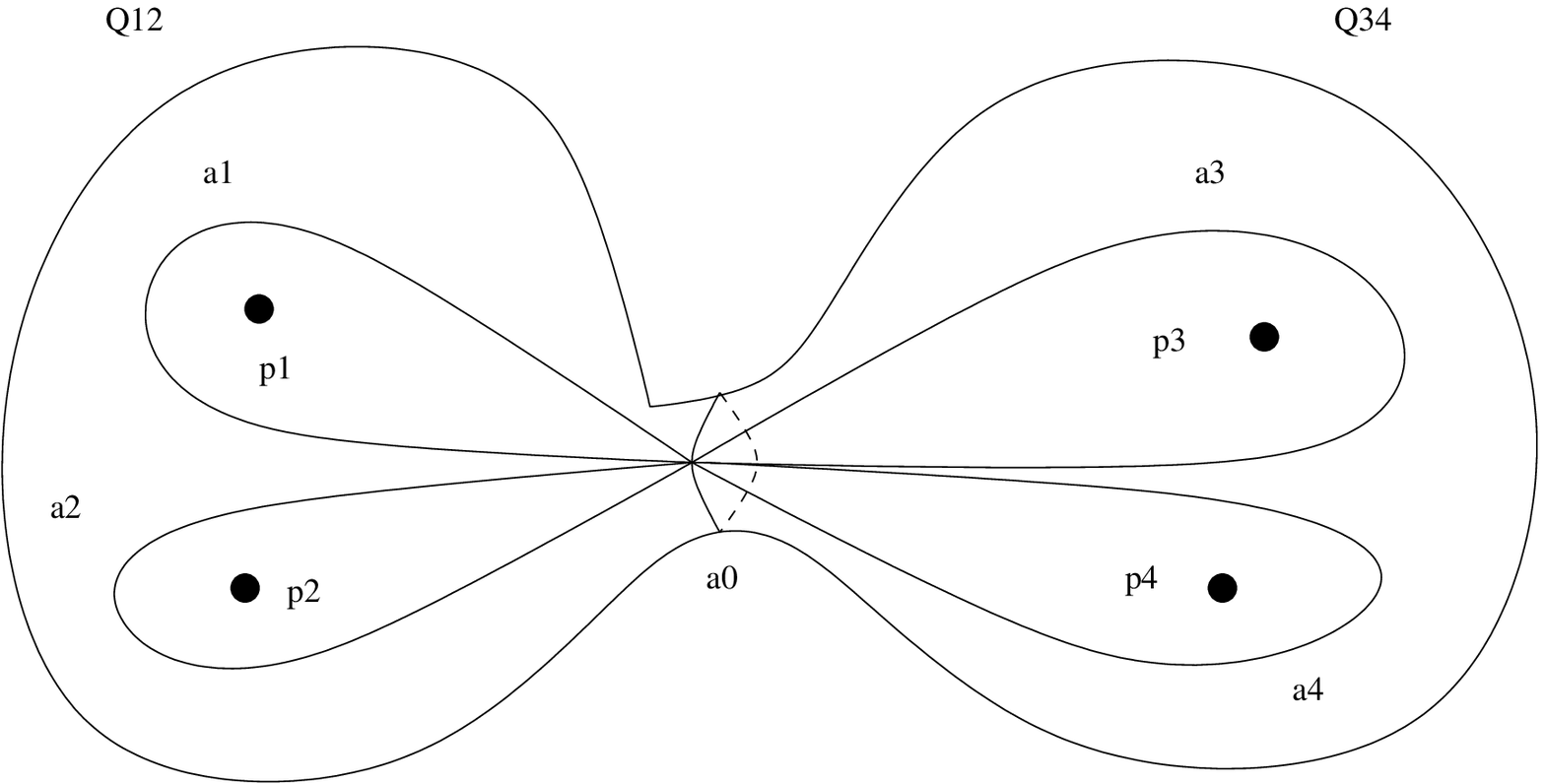}
    \end{figure}

Let $\gamma_0$ be the monodromy image of $\alpha_0$ in $S_d$. We have $\gamma_0 = (\gamma_1\gamma_2)^{-1} = \gamma_3\gamma_4$. Suppose $\gamma_0$ consists of $m$ cycles of length $a_1,\ldots, a_m$. Then the domain curve $C_0$ of $\pi_0$ has $m$ nodes $q_i$ 
for $1\leq i \leq m$ and $\pi_0$ restricted to a neighborhood of $q_i$ maps like $(x_i,y_i)\rightarrow (u,v) = (x_i^{a_i}, y_i^{a_i})$, where $(x_i,y_i)$ and $(u,v)$ are the local coordinates of the two branches of the nodes $q_i$ and $p_0$, respectively. Let $C_{12}, C_{34}$ be the two components of $C_0$ corresponding to the pre-images of $Q_{12}, Q_{34}$, respectively. 
The restriction of $\pi_0$ to $C_{12}$ is a degree $d$ cover of $Q_{12}$ branched at $p_0, p_1, p_2$ with the monodromy data $(\gamma_0, \gamma_1, \gamma_2)$ as an element in 
$\mbox{Hom}(\pi_1(Q_{12}, p_0, p_1, p_2; b), S_d)$. The connected components of $C_{12}$ correspond to the orbits of the $d$ letters $\{1,\ldots, d\}$ under the permutations generated by 
 $\gamma_0, \gamma_1, \gamma_2$. The same analysis holds for $\pi_0$ restricted to $C_{34}$. In particular, the topological type of $\pi_0$ is uniquely determined by the monodromy data 
 ${\bf r} = (\gof)$ of the nearby smooth cover $\pi$.  
 
 Let $C^{st}_0$ be the stabilization of $C_0$ by blowing down unstable rational components. Define $\delta(q_i)= 1$ or $0$, depending on whether or not the node $q_i$ maps to a node 
 of $C^{st}_0$ via $C_0 \rightarrow C_0^{st}$. Associate to ${\bf r}$ the following two weights
 $$ \delta_{3} ({\bf r}) = \sum_{i=1}^{m} \frac{1}{a_i}\delta(q_i)\ \mbox{and}\ \delta'_{3} ({\bf r}) = \sum_{i=1}^m\frac{1}{a_i}. $$
We similarly define $ \delta_{1} ({\bf r}), \delta'_{1} ({\bf r}) $ and $\delta_{2} ({\bf r}), \delta'_{2} ({\bf r})$ when $p_4$ approaches $p_1$ and $p_2$, respectively.  
Summing up the weights over all ${\bf r}$ in the orbit $\OO$, we define 
$$ \delta_{\OO} = \sum_{{\bf r}\in \OO} \sum_{i=1}^3\delta_i ({\bf r})\ \mbox{and}\ \delta'_{\OO} = \sum_{{\bf r}\in \OO} \sum_{i=1}^3\delta_i' ({\bf r}). $$

\begin{theorem}
\label{slope}
The slope of $\ZZ$ has the following expression 
$$ s(\ZZ) = \frac{12\delta_{\OO}}{\delta'_{\OO}+(d-\sum_{i=1}^4\sum_{j=1}^{k_i}\frac{1}{a_{i,j}})|\OO|},$$
where $|\OO|$ denotes the cardinality of the orbit $\OO$. 
\end{theorem}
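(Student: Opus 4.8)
The plan is to compute the two numbers $\deg h^{*}\delta|_{\ZZ}$ and $\deg h^{*}\lambda|_{\ZZ}$ separately, working on the universal admissible cover over $\ZZ$. I would write $f\colon \mathcal C\to\mathcal B$ for the universal degree $d$ cover, where $\pi\colon\mathcal C\to\ZZ$ is the family of domain curves and $\mathcal B\to\ZZ$ the family of $4$-pointed rational targets (the pullback along $e$ of the universal curve over $\Mof$), with $\Sigma_1,\dots,\Sigma_4\subset\mathcal B$ the sections carrying the branch points. Since the slope is invariant under finite base change, I may replace $\ZZ$ by a finite cover so that the universal cover is a genuine family of nodal curves, without affecting the ratio. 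Because stabilization is an isomorphism on $H^{0}$ of the dualizing sheaf, $\deg h^{*}\lambda|_{\ZZ}$ equals the degree of the Hodge bundle of the (possibly non-stable) family $\mathcal C\to\ZZ$, so I never need to pass to the stabilization for $\lambda$; stabilization only enters through $\delta$.

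For the boundary degree, I would note that $\partial\ZZ$ maps under $e$ into the three points $b_1,b_2,b_3\in\Mof$ where $p_4$ collides with $p_1,p_2,p_3$. A boundary point of $\ZZ$ over $b_3$ corresponds to a cycle of the permutation $g_1$ on $\OO$, and the ramification index of $e$ there equals the length $\ell$ of that cycle; near such a point the admissible cover structure forces each node $q_i$ of the degenerate domain curve, with local cover $(x_i,y_i)\mapsto(x_i^{a_i},y_i^{a_i})$, to smooth as $s_i^{a_i}=t$, hence as $x_iy_i=\tau^{\ell/a_i}$ in the coordinate $\tau$ on $\ZZ$. Thus $q_i$ contributes $\ell/a_i$, but only when it survives stabilization, i.e. weighted by $\delta(q_i)$. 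Since a $g_1$-cycle of length $\ell$ accounts for exactly $\ell$ data ${\bf r}$ on which $\delta_3({\bf r})$ is constant, the per-cycle total $\ell\,\delta_3({\bf r})$ collapses the sum over cycles into a sum over the orbit, and adding the analogous contributions over $b_1,b_2$ gives $\deg h^{*}\delta|_{\ZZ}=\sum_{{\bf r}\in\OO}\sum_{i=1}^3\delta_i({\bf r})=\delta_{\OO}$.

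For $\lambda$ I would apply Mumford's relation $12\lambda=\kappa+\delta$ to the family $\mathcal C\to\ZZ$ itself, where now $\delta$ counts all nodes with weights $1/a_i$ and so equals $\delta'_{\OO}$ by the same boundary analysis with the factor $\delta(q_i)$ dropped. It remains to compute $\kappa=\omega_{\mathcal C/\ZZ}^{2}$ via Riemann--Hurwitz, $\omega_{\mathcal C/\ZZ}=f^{*}\omega_{\mathcal B/\ZZ}+R$ with ramification divisor $R=\sum_{i,j}(a_{i,j}-1)\widetilde R_{i,j}$. Expanding the square produces three terms. The term $(f^{*}\omega_{\mathcal B/\ZZ})^{2}=d\,\omega_{\mathcal B/\ZZ}^{2}$ is evaluated from the genus $0$ relation $\omega_{\mathcal B/\ZZ}^{2}=-\delta(\mathcal B/\ZZ)=-3|\OO|$, since the target family is rational and $\lambda$ vanishes. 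The mixed and self-intersection terms follow from $\deg(\omega_{\mathcal B/\ZZ}|_{\Sigma_i})=\deg e^{*}\psi_i=|\OO|$, the adjunction value $\Sigma_i^{2}=-|\OO|$, and $\widetilde R_{i,j}^{\,2}=\tfrac1{a_{i,j}}\Sigma_i^{2}$ coming from the $a_{i,j}$-fold ramification; crucially, in the admissible cover compactification the branch sections remain disjoint, so the ramification curves over distinct $\Sigma_i$ never meet and all cross terms vanish. Using $\tfrac{(a-1)^{2}}{a}=a-2+\tfrac1a$, the $4d$ and $\sum_i k_i$ contributions cancel and I obtain $\kappa=(d-\sum_{i,j}\tfrac1{a_{i,j}})|\OO|$.

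Combining gives $12\deg h^{*}\lambda|_{\ZZ}=\delta'_{\OO}+(d-\sum_{i,j}\tfrac1{a_{i,j}})|\OO|$ and $\deg h^{*}\delta|_{\ZZ}=\delta_{\OO}$, and dividing yields the stated formula. The step I expect to be the main obstacle is the boundary bookkeeping: rigorously matching the ramification index of $e$, namely the cycle lengths of $g_1,g_2$, with the node-smoothing exponents $a_i$, so that the a priori fractional contributions $1/a_i$ organize into honest degrees over $\ZZ$ and sum correctly over the orbit; and verifying that the branch sections $\Sigma_i$ stay disjoint in the admissible cover family, which is precisely what kills the cross terms in $R^{2}$ and produces the clean cancellation.
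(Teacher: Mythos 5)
Your proposal is correct and follows the paper's own proof essentially step for step: the same orbifold-weighted boundary bookkeeping (with the $1/a$ weights explained by the local base change $t=s^a$) giving $\deg h^{*}\delta|_{\ZZ}=\delta_{\OO}$, and the same computation of $\deg h^{*}\lambda|_{\ZZ}$ via Mumford's relation $12\lambda=\kappa+\delta'$ applied to the (unstabilized) universal family after a finite base change, using $\omega_{S/\ZZ}=f^{*}\omega_{T/\Mof}+\sum_{i,j}(a_{i,j}-1)\Gamma_{i,j}$ and the identical intersection numbers $(\omega_{T/\Mof})^{2}=-3$, $\omega_{T/\Mof}\cdot\Sigma_i=-\Sigma_i^{2}=1$, $\Gamma_{i,j}^{2}=-|\OO|/a_{i,j}$, with disjoint cross terms (your derivations of $-3|\OO|$ via Mumford on the genus-$0$ family and of $\omega\cdot\Sigma_i$ via $\psi_i$-classes are cosmetic variants of the paper's blow-up computation). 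One small caution: the local equation $x_iy_i=\tau^{\ell/a_i}$ only makes literal sense when $a_i$ divides the ramification index $\ell$ of $e$, which can fail (e.g.\ for $\OO_1$ in Example~\ref{application} one has $\ell=1$ but $a=d$, and indeed $\delta_{\OO_1}=2d+1/d$ stays fractional rather than organizing into integer degrees), but the orbifold/base-change interpretation you invoke --- the paper's own device --- yields the rational contribution $\ell/a_i$ in all cases, so this does not affect the argument.
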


\begin{proof}
Recall how a nodal cover $\pi_0$ arises as the limit of a smooth cover $\pi$ when
the moving branch point $p_4$ approaches $p_3$. Suppose locally around a node of the domain curve $C_0$, $\pi_0$ maps like $(x ,y)\rightarrow (u,v) = (x^{a}, y^{a})$. If this node maps to a node of the stabilization $C_0^{st}$, it contributes $\frac{1}{a}$ to the degree of the total boundary class $\delta$ restricted to $\ZZ$. The weight $\frac{1}{a}$ comes from an orbifold viewpoint: for $s = xy$, $t = uv = x^ay^a = s^a$, locally a base change of degree $a$ is needed to realize a universal covering family. If this node maps to a smooth point of $C_0^{st}$, it does not contribute to the intersection with $\delta$. Summing over all nodes of $C_0$, the limit $\pi_0$ of $\pi$ contributes $ \delta ({\bf r})$ to the restriction of $\delta$. Summing over all $\pi$ parameterized in the orbit $\OO$ and also considering when $p_4$ approaches $p_1$ or $p_2$, we get $\mbox{deg} \ h^{*}\delta |_{\ZZ} = \delta_{\OO}$ by its definition. 

Since the slope is invariant under a finite base change, we can assume there is a universal covering map (after the base change) as follows: 
$$\xymatrix{
S \ar[r]^{f} \ar[d]  &  T \ar[d]  \\
\ZZ \ar[r]^{e}           & \Mof }
$$ 
The surface $S$ is a genus $g$ fibration over $\ZZ$ parameterizing the domain curves of the covers in $\ZZ$. The surface $T$ is isomorphic to the blow-up of $\PP^1\times\PP^1$ at the intersection points of three horizontal sections with the diagonal, where the sections and the diagonal parameterize the fixed branch points $p_1, p_2, p_3$ and the moving point 
$p_4$, respectively. Let $\Sigma_i$ on $T$ denote the proper transform of the three sections and the diagonal for $1\leq i \leq 4$. The horizontal morphisms $e$ and $f$ are finite of degree $|\OO|$ and $d|\OO|$, respectively. We have 
$$ f^{*} \Sigma_i = \sum_{j=1}^{k_i} a_{i,j} \Gamma_{i,j},$$
where $\Gamma_{i,j}$ are sections of $S$ parameterizing ramification points of the covers that map to $p_i$ for $1\leq i\leq 4$. 

Let $\omega_{S/\ZZ}$ and $\omega_{T/\Mof}$ be the relative dualizing sheaves of the two families $S$ and $T$, respectively. By Riemann-Roch, we have 
$$ \omega_{S/\ZZ} = f^{*} \omega_{T/\Mof} +  \sum_{i=1}^4\sum_{j=1}^{k_i} (a_{i,j}-1) \Gamma_{i,j}. $$ 
Hence, we can compute the self-intersection 
$$ (\omega_{S/\ZZ})^2  = (f^{*} \omega_{T/\Mof})^2 + 2 \sum_{i=1}^4\sum_{j=1}^{k_i} (a_{i,j}-1)(f^{*} \omega_{T/\Mof})\ldotp \Gamma_{i,j}
 +  \Big(\sum_{i=1}^4\sum_{j=1}^{k_i} (a_{i,j}-1) \Gamma_{i,j}\Big)^2. $$
Moreover, we have
$$ (f^{*} \omega_{T/\Mof})^2 = d|\OO| (\omega_{T/\Mof})^2 = -3d |\OO|, $$
$$ (f^{*} \omega_{T/\Mof})\ldotp \Gamma_{i,j} = \omega_{T/\Mof}\ldotp (f_{*}\Gamma_{i,j}) = |\OO| (\omega_{T/\Mof}\ldotp \Sigma_i) = - |\OO| (\Sigma_i)^2 = |\OO|, $$
$$ \Gamma_{i,j}\ldotp \Gamma_{i', j'} = 0 \ \mbox{for}\ (i,j)\neq (i',j') \ \mbox{and}\ (\Gamma_{i,j})^2  = \frac{|\OO|}{a_{i,j}}(\Sigma_i)^2 = -\frac{|\OO|}{a_{i,j}}. $$ 
Using the condition $\sum_{j=1}^{k_i} a_{i,j} = d$ for $1\leq i \leq 4$, a routine calculation shows that 
$$  (\omega_{S/\ZZ})^2 = \Big(d - \sum_{i=1}^4\sum_{j=1}^{k_i}\frac{1}{a_{i,j}}\Big) |\OO| . $$

For the family $S$ over $\ZZ$, by Mumford's relation, we have $\lambda = \frac{\delta' + \kappa}{12}$ restricted to $\ZZ$, where 
$\kappa =  (\omega_{S/\ZZ})^2$ and $\delta'$ enumerates the nodes of the fibers in $S$, which equals $\delta'_{\OO}$ by definition. 
The weight $\frac{1}{a_{i,j}}$ in the definition of $\delta'$ is due to the need of a base change of degree $a_{i,j}$ to realize a universal covering map around such a node. 
Note that $\delta'_{\OO}$ might be different from $\delta_{\OO}.$ If a node of the domain curve $C_0$ of a degenerate cover $\pi_0$ maps to a smooth point of the stabilization $C_0^{st}$, it does not contribute to the intersection with the boundary class $\delta$. Nevertheless, on the surface $S$, this node remains in the fiber $C_0$ and it contributes to $\delta'$. 

Overall, we obtain 
$$ \mbox{deg}\ \delta|_{\ZZ} = \delta_{\OO} \ \mbox{and}\ \mbox{deg}\ \lambda|_{\ZZ} = \frac{\delta'_{\OO}+(d-\sum_{i=1}^4\sum_{j=1}^{k_i}\frac{1}{a_{i,j}})|\OO|}{12}. $$
Hence, the slope of $\ZZ$ has the desired expression.  
\end{proof}

To illustrate how to apply Theorems~\ref{monodromy} and \ref{slope}, let us consider the following example. 

\begin{example}
\label{application}
Consider the case when all the $c_i$'s are of length $d$ for $1\leq i \leq 4$, where $d\geq 3$ is an odd number. There is an orbit $\OO_1$ of $Cov_d({\bf c})$ that consists of a unique 
monodromy data ${\bf r} = (\gamma, \gamma^{-1}, \gamma, \gamma^{-1})$ (up to equivalence) under the actions generated by $g_1, g_2$ in Theorem~\ref{monodromy}, where 
$\gamma$ is any cycle of length $d$ in $S_d$. Hence, we have the cardinality $|\OO_1| = 1$. The genus $g$ of the covers equals $d-1$. 

When $p_4$ approaches $p_3$, since $\gamma_3\gamma_4 = id$ consists of $d$ cycles of length 1, the degenerate covering curve has $d$ nodes. Its two components $C_{12}$ and $C_{34}$ are both rational and they meet at $d$ nodes whose local branches map to $p_0$ as $(x,y)\rightarrow (x, y)$. 
There is no unstable rational component, hence each node contributes $1$ and
$\delta_3 ({\bf r}) = \delta_3' ({\bf r}) = d$. The same holds for the case when $p_4$ approaches $p_1$, so $\delta_1 ({\bf r}) = \delta_1' ({\bf r}) = d$. 
When $p_4$ approaches $p_2$, since $d$ is odd, $\gamma_2\gamma_4 = \gamma^2$ consists of a single cycle of length $d$. The degenerate covering curve has a single node 
whose local branches map to $p_0$ like $(x,y)\rightarrow (x^d, y^d)$. Its two components $C_{12}$ and $C_{34}$ both have genus $\frac{d-1}{2}$ and there is no unstable rational component. 
Hence, the unique node contributes $\frac{1}{d}$ and $\delta_2 ({\bf r}) = \delta_2' ({\bf r}) = \frac{1}{d}$.

Since $|\OO_1| = 1$, we have $\delta_{\OO_1} = \delta'_{\OO_1} = d + d + \frac{1}{d} = \frac{2d^2+1}{d}.$ By the slope formula in Theorem~\ref{slope}, 
we obtain $$s(\overline{Z}_{\OO_1}) = \frac{8d^2+4}{d^2-1} = 8+\frac{12}{g^2+2g}.$$ 

Further assume that $d$ is coprime with 6. There is another orbit $\OO_2$ of $Cov_d({\bf c})$ that consists of a unique 
monodromy data ${\bf r} = (\gamma, \gamma, \gamma, \gamma^{-3})$ (up to equivalence) under the actions generated by $g_1, g_2$ in Theorem~\ref{monodromy}, where 
$\gamma$ is any cycle of length $d$ in $S_d$. Hence, we have the cardinality $|\OO_2| = 1$. The genus $g$ of the covers equals $d-1$. By a similar analysis as above, we get 
$\delta_1 ({\bf r}) = \delta_2({\bf r}) = \delta_3 ({\bf r}) = \frac{1}{d}$. Since $|\OO_2| = 1$, we have $\delta_{\OO_2} = \delta'_{\OO_2} = \frac{3}{d}.$ By the slope formula, we obtain $$s(\overline{Z}_{\OO_2}) = \frac{36}{d^2-1} = \frac{36}{g^2+2g}.$$

Note that $s(\overline{Z}_{\OO_1})$ converges to 8 while $s(\overline{Z}_{\OO_2})$ converges to 0 for $g\gg 0$. 
\end{example}

The covers in Example~\ref{application} are special cases of cyclic covers of $\PP^1$. In the next section, we will come up with a general formula to calculate the slope of Hurwitz spaces of cyclic covers. 

Let us also consider an example of non-cyclic covers. 

\begin{example}
\label{non-cyclic}
Consider $d = 4$ and ${\bf r} = ((1234), (1432), (123), (132))$. The notation $(a_1\ldots a_k)$ stands for a permutation in $S_d$ that sends $a_i$ to $a_{i+1}$. The orbit $\OO$ generated by ${\bf r}$ under the action in Theorem~\ref{monodromy}  
consists of six elements (up to equivalence): 
$$ {\bf r_1} = ((1234), (1432), (123), (132)); \ {\bf r_2} = ((1234), (1432), (132), (123)); $$
$$ {\bf r_3} = ((1234), (1324), (123), (243)); \ {\bf r_4} = ((1234), (1324), (134), (123)); $$
$$ {\bf r_5} = ((1234), (1324), (243), (134)); \ {\bf r_6} = ((1234), (1234), (123), (124)). $$
By Riemann-Hurwitz, the genus of these covers equals 2. Hence, we get an irreducible family $\overline{Z}_{\OO}$  
of degree 4, genus 2 covers of $\PP^1$ with two totally ramified points and two triply ramified points. Since the cardinality $|\OO|$ equals 6, 
$\overline{Z}_{\OO} \rightarrow \Mof$ is a finite morphism of degree 6. 

When $p_4$ meets $p_3$, the degenerate admissible cover arising nearby ${\bf r_1}$ looks as follows: 
\begin{figure}[H]
    \centering
    \psfrag{P}{$\PP^1$}
    \psfrag{2}{$p_{2}$}
    \psfrag{3}{$p_{3}$}
    \psfrag{4}{$p_{4}$}
    \psfrag{1}{$p_{1}$}
      \includegraphics[scale=0.5]{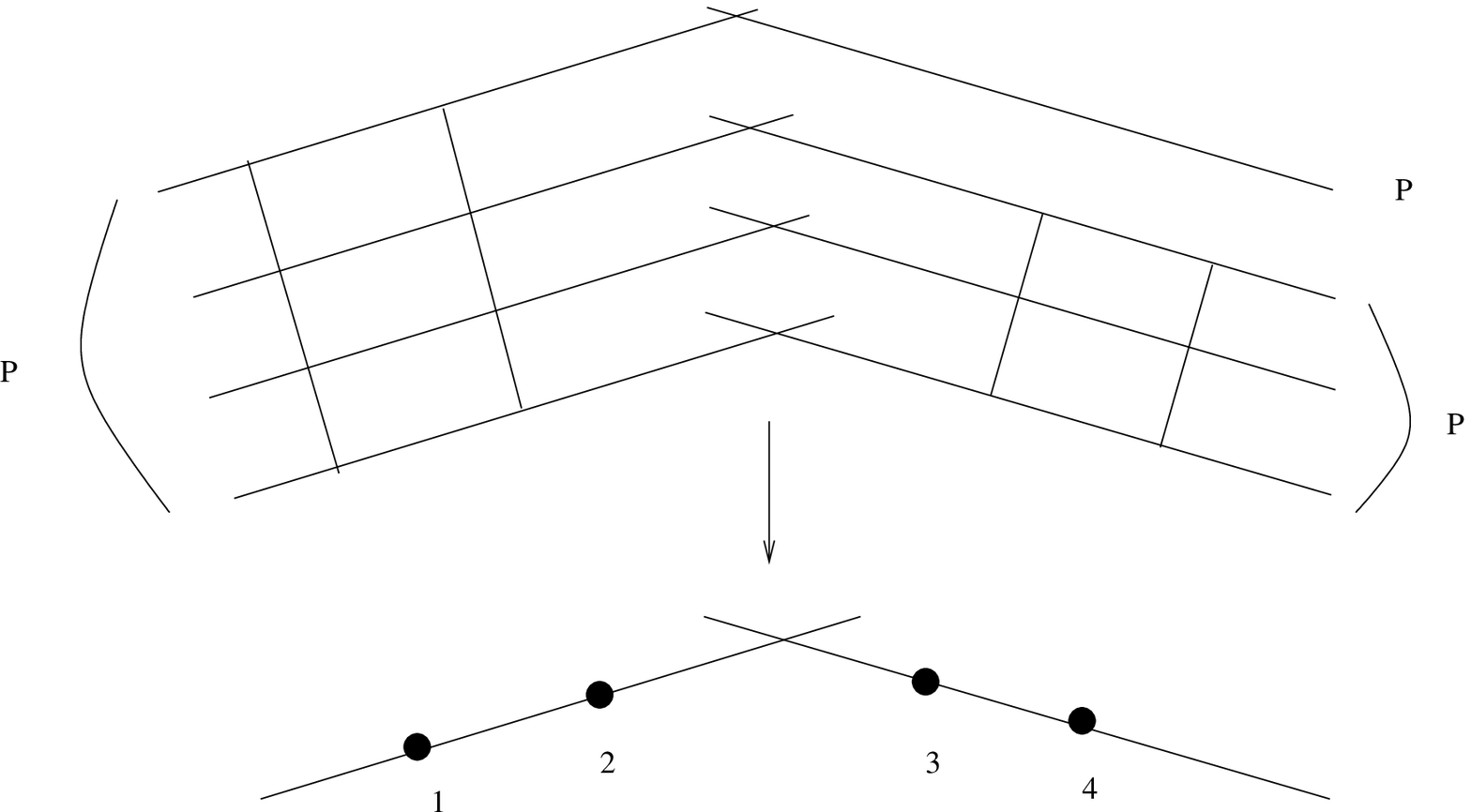}
    \end{figure}
The covering curve consists of a $\PP^1$ meeting another $\PP^1$ at three nodes and a third $\PP^1$ at one node. Its stabilization consists of the first two $\PP^1$ components meeting at three nodes. Around any node of the cover, the map is a local isomorphism, so each node has weight $\frac{1}{1} = 1$. The four nodes in total contribute 4 to $\delta'$ by definition. The node on top of the figure belongs to the semistable $\PP^1$ component that maps to a smooth point of the stabilization, so it does not contribute to $\delta$ by definition. The other three nodes in total contribute 3 to $\delta$. In the same way, one can analyze the degenerate admissible cover nearby ${\bf r_i}$ when $p_4$ meets $p_j$ and count the contributions to $\delta'$ and $\delta$, for $i = 1, \ldots, 6$ and $j = 1,2,3$. We skip the enumeration and just write down the result: $\delta'_{\OO} = 25$ and $\delta_{\OO} = 23$. By the slope formula, we obtain that 
$$s(\overline{Z}_{\OO}) =  \frac{12\cdot 23}{25 + 6\Big({\displaystyle 4 - \frac{1}{4} - \frac{1}{4} - \big(1+\frac{1}{3}\big) - \big(1 + \frac{1}{3}}\big)\Big)} = 9\frac{1}{5}. $$  
\end{example}

\section{Cyclic covers}
In this section, we fix the monodromy profile ${\bf r} = (\gamma^{a_1}, \gamma^{a_2}, \gamma^{a_3}, \gamma^{a_4})$, where $\gamma$ is a cycle of length $d$ in $S_d$, 
$\sum_{i=1}^{4} a_i  \equiv 0$ (mod $d$), $\mbox{gcd} (a_1, a_2, a_3, a_4, d) = 1$, and $1\leq a_i \leq d-1$. Under the actions $g_1, g_2$ in Theorem~\ref{monodromy}, it is easy to see that all the monodromy data in the orbit $\OO$ generated by ${\bf r}$ are equivalent up to the $S_d$ conjugate actions. Hence, the morphism $\ZZ\rightarrow \Mof$ is one to one.  

We can concretely write down such a cover corresponding to ${\bf r} = (\gamma^{a_1}, \gamma^{a_2}, \gamma^{a_3}, \gamma^{a_4})$ as
$$ y^{d} = (x-z_1)^{a_1} (x-z_2)^{a_2} (x-z_3)^{a_3} (x-z_4)^{a_4}, $$
where $z_1, z_2, z_3, z_4$ are four distinct points in $\PP^1$. 
The covering map is induced by $(x, y) \rightarrow x$. 
Given $\sum_{i=1}^{4} a_i  \equiv 0$ (mod $d$) and $\mbox{gcd} (a_1, a_2, a_3, a_4, d) = 1$, it is a cyclic cover with 
Galois group $\mathbb Z / d$ branched at four points $z_1, z_2, z_3, z_4$, cf. e.g., \cite{R}. Since $\mbox{gcd} (a_1, a_2, a_3, a_4, d) = 1$, the group $\langle \gamma^{a_1}, \gamma^{a_2}, \gamma^{a_3}, \gamma^{a_4} \rangle $ is the same as $\langle \gamma\rangle $, which acts transitively on the $d$ letters. So the covering curve is connected. 

Now let $z_4$ be the moving point in $\PP^1$. In order to apply Theorem~\ref{slope}, we need to analyze the degenerate covers when $z_4$ meets $z_1, z_2$ or $z_3$. 
Set up some notation as follows: 
$$d_i = \mbox{gcd} (a_i, d), \ 1\leq i \leq 4,$$
$$s_{ij} = \mbox{gcd} (a_i+a_j, d), \ 1\leq i < j\leq 4,$$ 
$$t_{ij} = \mbox{gcd} (a_i, a_j, d), \ 1\leq i < j \leq 4. $$
Note that by the assumption $\sum_{i=1}^{4} a_i  \equiv 0$ (mod $d$), we have $s_{ij} = s_{kl}$ for $\{i,j,k,l \} = \{  1,2,3,4 \}$. 

Let us study the degenerate cover when $z_4$ meets $z_3$. The other two cases follow by changing subindices accordingly. 
Let $\pi_{0}: C_0\rightarrow Q_{12} \cup_{p_0} Q_{34}$ be the degenerate admissible cover when $z_4$ meets $z_3$, where 
$  Q_{12} \cup_{p_0} Q_{34} $ is the nodal union of two $\PP^1$'s at the node $p_0$. As we have seen in the introduction, $C_0$ consists of two components $C_{12}$ 
and $C_{34}$ such that $\pi_{0}$ restricted to $C_{12}$ and $C_{34}$ are degree $d$ covers of $Q_{12}$ and $Q_{34}$, respectively. Furthermore, 
$C_{12} \rightarrow Q_{12}$ is branched at $z_1, z_2, p_0$ with the monodromy data $(\gamma^{a_1}, \gamma^{a_2}, \gamma^{-(a_1 + a_2)})$ and 
$C_{34} \rightarrow Q_{34}$ is branched at $z_3, z_4, p_0$ with the monodromy data $(\gamma^{a_3}, \gamma^{a_4}, \gamma^{-(a_3 + a_4)})$. 
In particular, $\gamma^{-(a_1 + a_2)} $ consists of $s_{12} $ cycles of length $d/s_{12}$. Therefore, $C_{12}$ and $C_{34}$ intersect at 
$s_{12}$ nodes. Locally around each node, $\pi_0$ is given by $(x, y )\rightarrow (u, v) = (x^{d/s_{12}}, y^{d/s_{12}})$. By definition, we have 
$$\delta'_3({\bf r}) = s_{12}\frac{1}{d/s_{12}} = \frac{s_{12}^2}{d}. $$ 

For the value of $\delta_3({\bf r})$, it depends on whether or not a node belongs to a component of $C_0$ that can be contracted to a smooth point of the stabilization $C_0^{st}$. 
Denote by $R$ a rational tail of $C_0$ if $R$ is a smooth rational component that meets $\overline{C_0\backslash R}$ at one point. 

\begin{lemma}
\label{tail}
The covering curve $C_0$ does not have any rational tail. 
\end{lemma}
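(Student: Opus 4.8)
The plan is to analyze $C_0$ component by component. Since $\pi_0$ is an admissible cover of the nodal target $Q_{12}\cup_{p_0}Q_{34}$, all nodes of the source $C_0$ lie over the single node $p_0$; the four marked points $z_1,\dots,z_4$ are smooth branch points. Consequently a component of $C_{12}$ meets the rest of $C_0$ precisely along the nodes it carries over $p_0$ (and likewise for $C_{34}$), so to rule out a rational tail I only need to control two quantities for each component: its genus and the number of nodes it carries over $p_0$.

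First I would describe the components of $C_{12}$ group-theoretically. The monodromy group of $C_{12}\to Q_{12}$ is $\langle\gamma^{a_1},\gamma^{a_2}\rangle=\langle\gamma^{t_{12}}\rangle$, cyclic of order $d/t_{12}$, so the connected components of $C_{12}$ are the orbits of $\langle\gamma^{t_{12}}\rangle$ on $\{1,\dots,d\}$, namely the $t_{12}$ cycles of $\gamma^{t_{12}}$, each of size $d/t_{12}$. The $s_{12}$ nodes over $p_0$ correspond to the cycles of $\gamma^{-(a_1+a_2)}$, i.e. to the orbits of $\langle\gamma^{a_1+a_2}\rangle=\langle\gamma^{s_{12}}\rangle$. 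Since $t_{12}\mid s_{12}$ we have $\langle\gamma^{s_{12}}\rangle\subseteq\langle\gamma^{t_{12}}\rangle$, and identifying $\{1,\dots,d\}$ with $\mathbb Z/d$ on which $\langle\gamma\rangle$ acts regularly, the orbits are cosets and the refinement is uniform; hence each component of $C_{12}$ carries exactly $[\langle\gamma^{t_{12}}\rangle:\langle\gamma^{s_{12}}\rangle]=s_{12}/t_{12}$ of these nodes. Therefore a component of $C_{12}$ is attached to the rest of $C_0$ at a single point if and only if $s_{12}=t_{12}$.

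Next I would compute the genus of such a component by Riemann--Hurwitz. It is a connected degree $d/t_{12}$ cyclic cover of $\PP^1$ branched over $z_1,z_2,p_0$ with local monodromy orders $d/d_1,\,d/d_2,\,d/s_{12}$, having $d_1/t_{12},\,d_2/t_{12},\,s_{12}/t_{12}$ points over these three branch points respectively (each quotient being an integer because $t_{12}$ divides $d_1,d_2,s_{12}$). A direct count of ramification yields
$$ 2g-2=\frac{d-d_1-d_2-s_{12}}{t_{12}}. $$
Imposing the tail condition $s_{12}=t_{12}$ together with $g=0$ then forces $d_1+d_2=d+t_{12}$.

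Finally I would observe that this last equation cannot hold. Because $1\le a_1,a_2\le d-1$, neither $a_i$ is divisible by $d$, so $d_1=\gcd(a_1,d)$ and $d_2=\gcd(a_2,d)$ are proper divisors of $d$ and hence satisfy $d_1,d_2\le d/2$, giving $d_1+d_2\le d<d+t_{12}$. Thus no component of $C_{12}$ is simultaneously rational and attached at one point, so none is a rational tail; the identical computation with the indices $3,4$ in place of $1,2$ excludes tails in $C_{34}$, and the cases where $z_4$ meets $z_1$ or $z_2$ follow by relabeling. The crux of the argument — the one step I expect to require care — is precisely this incompatibility of the single-node condition $s_{12}=t_{12}$ with rationality, where the divisibility constraints $d_1,d_2\mid d$ and $a_i\not\equiv 0$ do all the work.
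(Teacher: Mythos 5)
Your proof is correct and follows essentially the same route as the paper: both decompose $C_{12}$ into the $t_{12}$ orbits of $\langle \gamma^{t_{12}}\rangle$, apply Riemann--Hurwitz to a putative rational tail, and derive the contradiction $d_1+d_2 > d$ from $d_1, d_2 \le d/2$. The only cosmetic difference is that you count the $s_{12}/t_{12}$ nodes on each component and impose the tail condition as $s_{12}=t_{12}$, whereas the paper encodes the same condition by requiring the local monodromy at $p_0$ to be a single $d/t_{12}$-cycle before running the identical genus computation.
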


\begin{proof}
Note that the group $\langle  \gamma^{a_1}, \gamma^{a_2}, \gamma^{-(a_1 + a_2)} \rangle $ equals $\langle  \gamma^{t_{12}}\rangle$. 
Its action on the $d$ letters has $t_{12}$ orbits and each orbit contains $d/t_{12}$ letters. It implies that $C_{12}$ has $t_{12}$ connected components
and $\pi_0$ restricted to each component is a degree $d/t_{12}$ cover of $Q_{12}$ with the monodromy data
$(\eta^{a_1/t_{12}}, \eta^{a_2/t_{12}}, \eta^{-(a_1+a_2)/t_{12}})$ over the three branch points $z_1, z_2, p_0$, where $\eta$ is a single cycle of length $d/t_{12}$. 

By $\mbox{gcd}(a_1/t_{12}, a_2/t_{12}, d/t_{12}) = 1$, the monodromy data $(\eta^{a_1/t_{12}}, \eta^{a_2/t_{12}}, \eta^{-(a_1+a_2)/t_{12}})$ yield a connected 
cover $R$ of degree $d/t_{12}$ to $Q_{12}\cong \PP^1$. If $R$ is a rational tail of $C_0$, then $R$ meets $C_{34}$ at a unique node. It implies that $ \eta^{-(a_1+a_2)/t_{12}} $ consists of a single cycle of length 
$d/t_{12}$, namely, $(a_1 + a_2 )/t_{12}$ and $d/t_{12}$ are coprime. Moreover, $\mbox{gcd}(a_i/t_{12}, d/t_{12}) = d_i/t_{12}$ for $i = 1, 2$, hence 
$\eta^{a_i/t_{12}}$ consists of $d_i/t_{12}$ cycles of length $d/d_i$. By the Riemann-Hurwitz formula, the genus of $R$ satisfies 
$$2g_R - 2 + 2d/t_{12} = (d/t_{12} - 1) +  (d/t_{12} - d_1/t_{12}) + (d/t_{12} - d_2/t_{12}),  $$
$$ 2g_R = \frac{d-d_1-d_2}{t_{12}} + 1. $$
Since $g_R = 0$, we get $d < d_1 + d_2$. But $d_i = \mbox{gcd} (a_i, d) \leq d/2$ for $i = 1,2$, contradiction. 
\end{proof}

Since $C_0$ does not have any rational tail, its nodes map to the nodes of $C_0^{st}$. By definition, we have 
$$\delta_3({\bf r}) = \delta'_3({\bf r}) =\frac{s_{12}^2}{d}. $$ 

\begin{theorem}
\label{cyclic}
The Hurwitz component $\ZZ$ of cyclic covers of $\PP^1$ with four branch points has slope 
$$ s(\ZZ) = \frac{12(s^2_{12}+s^2_{23}+s^2_{13})}{s^2_{12}+s^2_{23}+s^2_{13} + d^2 - \sum\limits_{i=1}^4 d_i^2}, $$
where $s_{ij} = gcd(a_i+a_j, d)$ and $d_i = gcd(a_i,d).$ 
\end{theorem}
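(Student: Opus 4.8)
The plan is to feed the two local degeneration computations already carried out in this section into the slope formula of Theorem~\ref{slope}, specialized to the cyclic case. As noted at the start of the section, for the cyclic profile the map $\ZZ\to\Mof$ is one to one, so $|\OO|=1$ and both $\delta_{\OO},\delta'_{\OO}$ reduce to the single-representative weights. The first task is to assemble the three boundary contributions. The computation preceding Lemma~\ref{tail} shows that for the degeneration $z_4\to z_3$ the components $C_{12}$ and $C_{34}$ meet at $s_{12}$ nodes, each carrying local weight $s_{12}/d$ since $\pi_0$ looks like $(x,y)\mapsto(x^{d/s_{12}},y^{d/s_{12}})$ there, giving $\delta_3({\bf r})=\delta'_3({\bf r})=s_{12}^2/d$. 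Relabelling the indices, the splits are $\{2,3\}$ versus $\{1,4\}$ when $z_4\to z_1$ and $\{1,3\}$ versus $\{2,4\}$ when $z_4\to z_2$, with node counts governed by $s_{23}=\gcd(a_2+a_3,d)$ and $s_{13}=\gcd(a_1+a_3,d)$; I would thereby obtain $\delta_1=\delta'_1=s_{23}^2/d$ and $\delta_2=\delta'_2=s_{13}^2/d$.

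The equalities $\delta_i=\delta'_i$ in each case are precisely the assertion that $C_0$ has no rational tail, so before relabelling I would confirm that the argument of Lemma~\ref{tail} applies to all three splits and not merely to the one treated explicitly. This is where the only genuine care is needed, and it is the step I expect to be the main obstacle to a fully rigorous write-up: one must check that the Riemann--Hurwitz bound driving the lemma survives the relabelling. In fact the only inequality used there is $d_i=\gcd(a_i,d)\le d/2$, which is symmetric in the indices, so the same contradiction $d<d_i+d_j$ rules out a rational tail for every degeneration, and no new argument is required. With $|\OO|=1$, summing the three contributions gives
$$\delta_{\OO}=\delta'_{\OO}=\frac{s_{12}^2+s_{23}^2+s_{13}^2}{d}.$$

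It then remains to evaluate the correction term $d-\sum_{i,j}1/a_{i,j}$ in the denominator of Theorem~\ref{slope}. Over the branch point $p_i$ the monodromy is $\gamma^{a_i}$, which consists of $d_i=\gcd(a_i,d)$ cycles each of length $d/d_i$; hence $\sum_{j=1}^{k_i}1/a_{i,j}=d_i\cdot(d_i/d)=d_i^2/d$, and summing over $i$ yields $d-\sum_{i,j}1/a_{i,j}=(d^2-\sum_{i=1}^4 d_i^2)/d$. Finally I would substitute these values into the slope formula with $|\OO|=1$ and clear the common factor $1/d$ from numerator and denominator, which produces exactly the asserted expression. Everything past the rational-tail verification is routine bookkeeping: the clean coincidence $\delta_{\OO}=\delta'_{\OO}$, on which the final symmetric form depends, is guaranteed precisely by Lemma~\ref{tail}, whose failure would force some nodes to carry weight $0$ in $\delta$ and spoil the equality.
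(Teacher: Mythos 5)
Your proposal is correct and follows essentially the same route as the paper's proof: take $|\OO|=1$, feed $\delta_i({\bf r})=\delta'_i({\bf r})=s_{jk}^2/d$ for the three degenerations and $\sum_{j}1/a_{i,j}=d_i^2/d$ into Theorem~\ref{slope}, and clear the factor $1/d$. Your one added point of care---checking that the Riemann--Hurwitz contradiction in Lemma~\ref{tail} only uses the symmetric bound $d_i=\gcd(a_i,d)\leq d/2$ and hence applies to all three splittings---is exactly what the paper compresses into ``the other two cases follow by changing subindices accordingly,'' so it is a welcome elaboration rather than a divergence.
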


\begin{proof}
We have seen that $\delta_3({\bf r}) = \delta'_3({\bf r}) =s_{12}^2/d$. Similarly, we have $\delta_1({\bf r}) = \delta'_1({\bf r}) =s_{23}^2/d$
and $\delta_2({\bf r}) = \delta'_2({\bf r}) =s_{13}^2/d$. Since the orbit $\OO$ generated by ${\bf r}$ under the actions in Theorem~\ref{monodromy} 
contains a unique equivalent class up to the $S_d$ conjugate actions, by definition, we have $|\OO| = 1$ and  
$$\delta_{\OO} = \delta'_{\OO} =  \frac{s^2_{12}+s^2_{23}+s^2_{13}}{d}. $$ 
Moreover, $\gamma^{a_i}$ consists of $d_i$ cycles of length $d/d_i$ for $1\leq i\leq 4$. 
By Theorem~\ref{slope}, we have 
$$s(\ZZ) = \frac{\frac{12}{d}(s^2_{12}+s^2_{23}+s^2_{13})}{\frac{1}{d}(s^2_{12}+s^2_{23}+s^2_{13}) + d - \sum\limits_{i=1}^4 \frac{d^2_i}{d}}. $$
After simplifying, we thus obtain the desired expression for $s(\ZZ)$. 
\end{proof}

Let us revisit the two special cyclic covers $\OO_1$ and $\OO_2$ in Example~\ref{application}. For $\OO_1$, it corresponds to the case when 
$a_1 = a_3 = 1, a_2 = a_4 = d-1$ and $d$ is odd. Then we have $d_i = 1$ for $1\leq i\leq 4$, $s_{12} = s_{23} = d$ and $s_{13} = 1$. 
By Theorem~\ref{cyclic}, we have 
$$ s(\overline{Z}_{\OO_1}) = \frac{8d^2+4}{d^2-1}.$$ 
For $\OO_2$, it corresponds to the case when $a_1 = a_2 = a_3 = 1, a_4 = d-3$ and $d$ is coprime with 6. Then we have 
$d_i = 1$ for $1\leq i \leq 4$ and $s_{ij} = 1$ for $1\leq i < j \leq 3$. 
By Theorem~\ref{cyclic}, we have 
$$ s(\overline{Z}_{\OO_2}) = \frac{36}{d^2-1}. $$ 
These results coincide with our previous analysis in Example~\ref{application}.  

\begin{remark}
In \cite{C1}, the author considered covers of an elliptic curve with a unique branch point. For fixed $g$, there exist such covers of arbitrarily large degree and the limit of slopes of corresponding Hurwitz spaces can provide a lower bound for the slope of effective divisors on $\Mg$. Nevertheless, the degree is bounded for genus $g$ cyclic covers of $\PP^1$ with four branch points. In fact, by the Riemann-Hurwitz formula, we know $$ g = d +1 - \frac{1}{2} \sum_{i=1}^4  \mbox{gcd} (a_i, d) $$ 
for a cyclic cover given by $y^{d} = (x-z_1)^{a_1} (x-z_2)^{a_2} (x-z_3)^{a_3} (x-z_4)^{a_4}$. 
Since $1\leq a_{i}\leq d-1$ for $1\leq i\leq 4$ and $\mbox{gcd} (a_1, a_2, a_3, a_4, d) = 1$, we have 
$$\sum_{i=1}^4 \mbox{gcd} (a_i, d) \leq \frac{d}{2}+ \frac{d}{2} + \frac{d}{2} +\frac{d}{3} = \frac{11d}{6}. $$
Hence, $ g \geq d+1 - \frac{11d}{12}$ and $d \leq 12(g-1)$. 
\end{remark}

For an effective divisor $D$ in $\Mg$ with divisor class $a\lambda -\sum_{i=0}^{[g/2]}b_i\delta_i$, define its slope 
$$s(D) = \frac{a}{\mbox{min}\{b_i\}},$$ 
where $a, b_i > 0$. If an irreducible curve $C$ in $\Mg$ has slope $s(C) > s(D)$, then 
it implies $C\ldotp D < 0$ and $C$ is contained in $D$. By this argument, our calculation of $s(\ZZ)$ can induce various applications 
regarding the geometry of covering curves in the Hurwitz space $\ZZ$. For instance, the divisor $\Theta$ on $\Mg$ parameterizing curves with an even theta-characteristic 
$\eta$ such that $h^{0}(\eta) > 0$ has slope $8 + 1/2^{g-3}$. The Hurwitz space $\overline{Z}_{\OO_1}$ in Example~\ref{application} has slope 
$$s(\overline{Z}_{\OO_1}) =  \frac{8d^2+4}{d^2-1} = 8+\frac{12}{g^2+2g}$$ 
for odd $d$ and even $g= d-1$. When $g\geq 6$, we have $s(\overline{Z}_{\OO_1}) > s(\Theta)$, hence a curve in $\overline{Z}_{\OO_1}$ can admit an even theta-characteristic $\eta$ with $h^{0}(\eta) > 0$. 

Consider a cyclic cover 
$\pi: C \rightarrow \PP^1$ defined by 
$$ y^{d} = (x-z_1)^{a_1} (x-z_2)^{a_2} (x-z_3)^{a_3} (x-z_4)^{a_4}.$$
The quadratic differential 
$$ q_0 = \frac{(dz)^2}{(z-z_1)(z-z_2)(z-z_3)(z-z_4)} $$
defines a flat metric on $\PP^1$. For a suitable choice of $z_1,z_2, z_3, z_4$, the sphere can be seen as a pillow case by gluing 
two copies of a unit square along their boundary. Consequently $C$ can be tiled by $2d$ unit squares, i.e. it is a square-tiled surface, 
cf. \cite{FMZ} for a detailed description. 

Regard the pair $(C, \pi^{*}(dz))$ as a point in the moduli space of quadratic differentials 
$\mathcal Q$. There is an induced SL($2,\RR$) action on $\mathcal Q$ by acting on the real and imaginary parts of a quadratic differential. 
Note that the orbit generated by $(C, \pi^{*}(dz))$ is the component $Z_{\OO}$ of the 1-dimensional Hurwitz space containing the cover $\pi$ (up to a finite base change, depending on whether the branch points are ordered), since the SL($2,\RR$) action on $(C, \pi^{*}(dz))$ amounts to varying the 
cross-ratio of $z_1,z_2, z_3, z_4$. Thus $\ZZ$ projects to $\Mg$ as a closed algebraic curve, which is a Teichm\"{u}ller curve. In general, it is well-known that the SL($2,\RR$) orbit of a square-tiled surface yields a so-called arithmetic Teichm\"{u}ller curve. 

Let $L$ denote the sum of the non-negative Lyapunov exponents of the Hodge bundle over a Teichm\"{u}ller curve. Roughly speaking, the Laypunov exponents measure the growth rate of the length of a vector in the bundle under parallel transport along the Teichm\"{u}ller geodesic flow, cf. \cite{K} for an introduction on Lyapunov exponents. For a Teichm\"{u}ller curve $\ZZ$ parameterizing cyclic covers of $\PP^1$ with four branch points, the sum $L$ was calculated in \cite{FMZ}. In fact, our analysis for the slope of $\ZZ$ can also deduce the value of $L$. 

\begin{theorem}
\label{Lcyclic}
The sum of Lyapunov exponents of the Hodge bundle over $\ZZ$ is equal to 
$$ L =  \frac{d}{6}- \frac{1}{6d}\sum_{i=1}^4 gcd^2(d, a_i) + \frac{1}{6d}\Big(gcd^2(d, a_1+a_2) + gcd^2(d, a_1+a_3)+ gcd^2(d, a_2+a_3)\Big).$$ 
\end{theorem}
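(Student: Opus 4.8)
The plan is to derive $L$ from the degree of the Hodge bundle already extracted in the proof of Theorem~\ref{slope}, exploiting that $\ZZ$ is a Teichm\"uller curve. The key input is Kontsevich's formula expressing the sum of the non-negative Lyapunov exponents of the Hodge bundle over a Teichm\"uller curve $C$ as
$$ L = \frac{2\,\mbox{deg}\ \lambda|_{\overline C}}{-\chi(C)}, $$
where $-\chi(C) = 2g_C - 2 + s$ is the orbifold Euler characteristic ($s$ the number of cusps) and $\mbox{deg}\ \lambda|_{\overline C}$ is the degree of the Hodge bundle over the compactified curve; see \cite{EKZ1, K}. Since both numerator and denominator are multiplied by the degree under a finite base change, the ratio is invariant, which is consistent with the invariance of $s(\ZZ)$ recorded after the definition of the slope.

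First I would compute $-\chi(\ZZ)$. The morphism $e\colon \ZZ\rightarrow \Mof$ is finite of degree $|\OO|$, and for the cyclic profile fixed in this section $|\OO| = 1$, so $e$ is one to one. Thus the open Teichm\"uller curve maps isomorphically, as an orbifold and up to the finite base change flagged in the text, onto $M_{0,4}\cong \PP^1\backslash\{p_1,p_2,p_3\}$, whose Euler characteristic is $2-3=-1$. Hence $-\chi(\ZZ)=1$ and the formula reduces to $L = 2\,\mbox{deg}\ \lambda|_{\ZZ}$.

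Next I would read off $\mbox{deg}\ \lambda|_{\ZZ}$ from the proof of Theorem~\ref{slope}, where Mumford's relation gave
$$ \mbox{deg}\ \lambda|_{\ZZ} = \frac{\delta'_{\OO}+\big(d-\sum_{i=1}^4\sum_{j=1}^{k_i}\frac{1}{a_{i,j}}\big)|\OO|}{12}. $$
For the present cyclic covers $|\OO|=1$, and $\gamma^{a_i}$ consists of $d_i$ cycles of length $d/d_i$, so $\sum_{j}\frac{1}{a_{i,j}} = d_i^2/d$; moreover $\delta'_{\OO} = (s^2_{12}+s^2_{23}+s^2_{13})/d$ by Theorem~\ref{cyclic}. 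Substituting yields
$$ \mbox{deg}\ \lambda|_{\ZZ} = \frac{1}{12d}\Big(s^2_{12}+s^2_{23}+s^2_{13}+d^2-\sum_{i=1}^4 d_i^2\Big), $$
and doubling this, with $d_i=\gcd(d,a_i)$, $s_{12}=\gcd(d,a_1+a_2)$, $s_{13}=\gcd(d,a_1+a_3)$, $s_{23}=\gcd(d,a_2+a_3)$, gives precisely the asserted expression for $L$.

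The main obstacle is securing the input formula and its normalization rather than the final substitution: one must justify the factor $2$ and the identification $-\chi(\ZZ)=1$ with the correct orbifold weights, in particular checking that the finite base change depending on whether the branch points are ordered scales the numerator and denominator by the same factor and therefore cancels in $L$. Once this is in place, the coincidence that the denominator of $L$ agrees with the quantity $s^2_{12}+s^2_{23}+s^2_{13}+d^2-\sum_i d_i^2$ appearing in the slope $s(\ZZ)$ is exactly what links the Lyapunov sum to the slope computation of Theorem~\ref{cyclic}.
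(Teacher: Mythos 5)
Your proposal is correct and takes essentially the same route as the paper: the paper likewise applies the (variant) Kontsevich formula $L = 2\,\mbox{deg}\ \lambda|_{\ZZ}/(2g(\ZZ)-2+s)$, observes that $\ZZ\cong\PP^1$ with exactly three singular covers so that the denominator equals $1$, and substitutes the degree of $\lambda$ already computed in Theorems~\ref{slope} and~\ref{cyclic}. Your Euler-characteristic phrasing of the denominator and your explicit substitution of $\delta'_{\OO}$ and $\sum_j 1/a_{i,j}=d_i^2/d$ are just spelled-out versions of what the paper cites implicitly.
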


\begin{proof}
By a slight variation of the Kontsevich formula \cite{K}, cf. \cite{BM} and \cite{EKZ2}, we have 
$$ L = \frac{2(\mbox{deg}\ \lambda|_{\ZZ})}{2g(\ZZ) - 2 + s}, $$
where $g(\ZZ)$ is the genus of $\ZZ$ and $s$ is the number of singular covers. The Hurwitz component 
$\ZZ$ is isomorphic to $\PP^1$ and it contains three singular covers, when the moving branch point $z_4$ meets one of the 
fixed branch points $z_1, z_2, z_3$. Therefore, we get $ L  =  2(\mbox{deg}\ \lambda|_{\ZZ}). $
The degree of $\lambda$ has been calculated in the proof of Theorem~\ref{slope} in a more general setting, 
and worked out explicitly in the proof of Theorem~\ref{cyclic} for cyclic covers. Using these data, we thus obtain 
the desired equality for $L$. 
\end{proof}

\begin{remark}
Individual Laypunov exponents of $\ZZ$ were further calculated in \cite{EKZ2}. For a general Teichm\"{u}ller curve, there is an analogue of the Kontsevich formula to interpret individual Laypunov exponents by the degree of local systems \cite{BM}.  
\end{remark}

\section{Quadratic differentials}
We first introduce the moduli space of quadratic differentials, cf. e.g., \cite{EO2} and \cite{L} for more details. 

Let $\mu= (2m_1, \ldots, 2m_k)$ and $\nu = (2n_1-1, \ldots, 2n_l-1)$ be two partitions, where $m_i, n_j \geq 1$ and $l$ is even. Let $\QQ (\mu, \nu)$ denote the moduli space of quadratic differentials parameterizing pairs $(C, \psi)$, where $\psi$ is a quadratic differential of a smooth curve $C$, the divisor $(\psi) = \sum 2m_ip_i + \sum (2n_j - 1) q_j$ and $\psi$ is not a global square of an Abelian differential. The genus $g$ of $C$ satisfies $$ 4g-4 = \sum 2m_i + \sum (2n_j-1). $$ 

One can associate $(C, \psi)$ a canonical double cover $\pi: \tilde{C} \rightarrow C$ such that $\pi^{*}\psi = \omega^2$, where $\omega$ is an Abelian differential on $\tilde C$, cf. \cite[Construction 1]{L}. 
The double cover $\pi$ is branched exactly at $q_1, \ldots, q_l$ the zeros of $\psi$ with odd multiplicities. Let $\pi^{-1}(p_i) = \{p_{i}', p_{i}''\}$ and $\pi^{-1}(q_{j}) = q_j'$. One checks that 
$(\omega) = \sum m_i (p_i' + p_i'') + \sum 2n_j q_j'$. Namely, $(\tilde C, \omega)$ is parameterized in $\HH (m_i, m_i, 2n_j)$ the corresponding moduli space of Abelian differentials. The genus $\tilde g$ of $\tilde C$ satisfies 
$$ 2\tilde g-2 = 2(2g-2) + l. $$

Let $\sigma: \tilde C\rightarrow \tilde C$ be the involution such that $C \cong \tilde C/ \sigma$. Since $\psi$ is not a global square, we have $\sigma^{*}\omega = - \omega$. The relative homology group $H_1 (\tilde C, p_i', p_i'', q_j'; \mathbb C)$ splits into a direct sum of two eigenspaces $H_{+}\oplus H_{-}$ under the involution $\sigma$, where $H_{+} = \pi^{*}H_{1}(C, p_i, q_j ; \mathbb C)$. For any $\gamma\in H_{+}$, we have $\int_{\gamma} \omega = 0$. Then $H_{+}$ has dimension 
$$n = (2\tilde{g} + 2k + l - 1) - (2g + k + l -1) = 2g -2 + k + l.$$
Let $\gamma_1, \ldots, \gamma_n$ be a basis of $H_{-}$. It was noticed by Kontsevich that 
the period map 
$$\Phi: (C, \psi) \rightarrow \Big(\int_{\gamma_1}\omega, \ldots, \int_{\gamma_n} \omega\Big) \in \mathbb C^n $$ 
yields a local coordinate system for $\QQ (\mu, \nu)$. 

Let $\mathbb T = \mathbb C/\mathbb Z^2$ be the standard torus. Consider the double cover $\rho: \mathbb T\rightarrow \PP^1 \cong \mathbb T/\pm$. In this way, $\PP^1$ can be regarded as a pillow case with four branch points at 
$$z_1 = 0, \ z_2 = \frac{1}{2}, \ z_3 = \frac{\sqrt{-1}}{2}, \ z_4 = \frac{1+\sqrt{-1}}{2}.$$
The quadratic differential $(dz)^2$ on $\mathbb T$ descends to $\PP^1$ and it has a simple pole at each $z_i$. Define the covering set $$ Cov(\mu, \nu) = \{f: C\rightarrow \PP^1\}$$ parameterizing connected covers of even degree such that $f$ is branched at $z_1$ of ramification type $(2n_1+1, \ldots, 2n_l+1, 2, \ldots, 2)$, branched at $z_i$ for $2\leq i\leq 4$ of type $(2, \ldots, 2)$ and branched at $k$ other fixed points of type $(m_i+1, 1, \ldots, 1)$ for $1\leq i \leq k$. For any $f\in Cov(\mu, \nu)$, one checks that $(C, f^{*}(dz)^2) \in \QQ (\mu, \nu)$. Moreover, by the construction of canonical double covers, there exists a unique cover
$\tilde f: \tilde C\rightarrow \mathbb T$ that completes the following commutative
diagram: 
$$\xymatrix{
\tilde C \ar[r]^{\pi} \ar[d]_{\tilde f}  & C \ar[d]^{f} \\
\mathbb T \ar[r]^{\rho}     & \PP^1} $$ 

We take an explicit basis $\gamma_1, \ldots, \gamma_n$ of $H_{-}$ as follows. Let $\gamma_1,\ldots, \gamma_{2g-2+l}$ be a basis of $H_1(\tilde C; \mathbb Z)/\pi^{*}H_1(C; \mathbb Z)$. Let $\gamma_{2g-2+ l + i} = \overline{p_i'q_1'} - \sigma (\overline{p_i'q_1'})$ for $1\leq i \leq k$, where $\overline{pq}$ means a path connecting $p$ and $q$. See the picture below. 

\begin{figure}[H]
    \centering
    \psfrag{1}{$p'_{1}$}
    \psfrag{2}{$p''_{1}$}
    \psfrag{3}{$p'_{k}$}
    \psfrag{4}{$p''_{k}$}
    \psfrag{q}{$q'_1$}
    \includegraphics[scale=0.5]{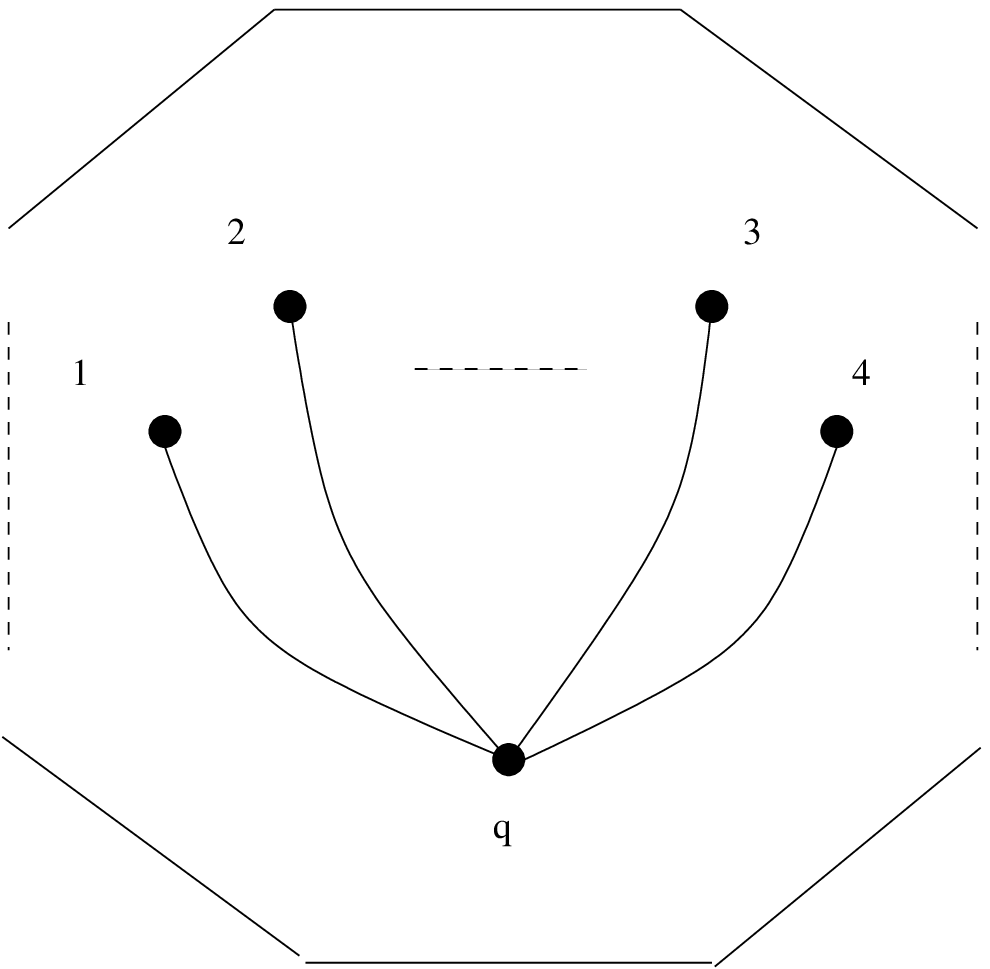}
    \end{figure}

The following result implies that covers in $Cov(\mu, \nu)$ correspond to a shift of lattice points in $\QQ (\mu, \nu)$ under the period map. It was stated and used in \cite{EO2} for evaluating the volume of 
$\QQ (\mu, \nu)$. Its analogue for Abelian differentials was proved in \cite[Lemma 3.1]{EO1}. 

\begin{lemma}
\label{EOQ}
For $(C, \psi)\in \QQ (\mu, \nu)$, consider its coordinates 
$\Phi(C, \psi) = (\phi_1, \ldots, \phi_n) \in \mathbb C^n$, where $\phi_{i_1}\neq\phi_{i_2}, z_j$ mod $(\mathbb Z^2, \pm)$ for 
any $i_1, i_2 > 2g-2 + l$ and $1\leq j \leq 4$. Then we have $\phi_i \in \mathbb Z^2$ for $1\leq i \leq 2g-2+l$ if and only if 
 the following conditions hold: 
 
\noindent (1) there exists a cover $f: C \rightarrow \PP^1 = \mathbb T/\pm$ and $\psi = f^{*}(dz)^2$; 
 
 \noindent (2) $f(q_j) = z_1 = 0$ for $1\leq j\leq l$ and $f(p_i)  = \phi_{2g-2+l+i}$ (mod $\mathbb Z^2, \pm$) for $1\leq i\leq k$;
 
 \noindent (3) the ramification of $p_i$ is of the form $z \rightarrow z^{m_i+1}$ and the ramification of $q_j$ is
 $z\rightarrow z^{2n_j+1}$; 
 
\noindent (4) $f$ is branched at $z_1$ of ramification type $(2n_1+1, \ldots, 2n_l+1, 2, \ldots, 2)$, branched at $z_j$ of type $(2, \ldots, 2)$ for $2\leq j\leq 4$ and branched at $\phi_{2g-2+l+i}$ of type $(m_i+1, 1, \ldots, 1)$ for $1\leq i \leq k$.  
\end{lemma}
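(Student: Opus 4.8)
The plan is to deduce the statement from its Abelian-differential counterpart, the cited Lemma~3.1 of \cite{EO1}, by transporting everything through the canonical double cover $\pi\colon \tilde C\to C$ and keeping track of the involution $\sigma$. The structural fact driving both directions is that $\sigma^{*}\omega=-\omega$, so that $\int_{\gamma}\omega=0$ for every $\gamma\in H_{+}$ and all the periods of $\omega$ are carried by $H_{-}$. Consequently the only coordinates that can fail to be integral are $\phi_1,\dots,\phi_n$, and among these the first $2g-2+l$ are the \emph{absolute} ones, coming from $H_1(\tilde C;\mathbb Z)/\pi^{*}H_1(C;\mathbb Z)$, while the last $k$ are \emph{relative}, recording the flat positions of the $p_i'$ relative to the base point $q_1'$. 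The mechanism is standard: integrality of the absolute periods of $\omega$ is precisely the condition for the multivalued developing map $x\mapsto\int_{q_1'}^{x}\omega$ to descend to a genuine map $\tilde f\colon \tilde C\to\mathbb T=\mathbb C/\mathbb Z^2$ with $\tilde f^{*}(dz)=\omega$.

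For the implication (1)--(4) $\Rightarrow$ integrality I would argue directly. Given $f$ and the completed square $f\pi=\rho\tilde f$, the differential $\tilde f^{*}(dz)$ squares to $\pi^{*}\psi=\omega^2$ and is $\sigma$-anti-invariant (since $dz$ is odd for $w\mapsto -w$), so $\tilde f^{*}(dz)=\omega$ after fixing the sign. Then for any closed $\gamma$ on $\tilde C$ we have $\int_{\gamma}\omega=\int_{\tilde f_{*}\gamma}dz$, the integral of $dz$ over a closed loop on $\mathbb T$, which lies in the period lattice $\mathbb Z^2$. Applying this to $\gamma_1,\dots,\gamma_{2g-2+l}$ yields $\phi_i\in\mathbb Z^2$ for $1\le i\le 2g-2+l$.

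For the converse I would run the descent construction. Assuming $\phi_i\in\mathbb Z^2$ for $i\le 2g-2+l$, the developing map based at $q_1'$ descends to $\tilde f\colon \tilde C\to\mathbb T$ with $\tilde f^{*}(dz)=\omega$ and $\tilde f(q_1')=0$. Since $\sigma^{*}\omega=-\omega$ and $q_1'$ is $\sigma$-fixed, integrating gives $\tilde f\circ\sigma=(-1)\circ\tilde f$, so $\tilde f$ descends to $f\colon C=\tilde C/\sigma\to\mathbb T/\pm=\PP^1$ with $f\pi=\rho\tilde f$ and $\psi=f^{*}(dz)^2$, which is (1). The ramification profile (3) is read off the vanishing orders of $\omega$: a zero of order $a$ forces the local shape $\zeta\sim z^{a+1}$ in the flat coordinate, so $\tilde f$, and hence $f$, has ramification index $2n_j+1$ at $q_j$ and $m_i+1$ at $p_i$. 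The positions in (2) and the global ramification data in (4) then come from tracking where the zeros of $\omega$ go: the $p_i'$ land at $\pm\phi_{2g-2+l+i}$ by the very definition of the relative coordinates, while each $\sigma$-fixed point $q_j'$ lands at a two-torsion point of $\mathbb T$, i.e.\ over one of $z_1,\dots,z_4$.

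I expect the crux to be exactly this last piece of branch-point bookkeeping: forcing all $l$ odd zeros $q_j$ over the single branch value $z_1=0$ rather than distributing them among the four two-torsion points. The useful observation is that for each $j$ the antisymmetrized path $\overline{q_1'q_j'}-\sigma(\overline{q_1'q_j'})$ is a \emph{closed}, $\sigma$-anti-invariant cycle, hence an integral combination of $\gamma_1,\dots,\gamma_{2g-2+l}$; integrality of the absolute periods therefore pins $\tilde f(q_j')$ down to a two-torsion point of $\mathbb T$. Identifying this value with $z_1=0$ for every $j$ is where the normalization built into the chosen basis enters, namely that all relative cycles share the endpoint $q_1'$, together with the defining constraints of $Cov(\mu,\nu)$; restricting to the appropriate lattice compatible with the pillowcase at $z_1$ is the step I would carry out most carefully. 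Finally I would invoke the non-degeneracy hypothesis $\phi_{i_1}\neq\phi_{i_2},z_j$ to ensure the $p_i$ project to distinct points away from the four branch values, so that the ramification is exactly $(2n_1+1,\dots,2n_l+1,2,\dots,2)$ over $z_1$, $(2,\dots,2)$ over $z_2,z_3,z_4$, and $(m_i+1,1,\dots,1)$ over each $\phi_{2g-2+l+i}$, giving (2) and (4).
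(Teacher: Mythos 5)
Your forward direction coincides with the paper's (push the cycles $\gamma_i$ forward under $\tilde f$ to closed loops on $\mathbb T$ and integrate $dz$), and your descent mechanism for producing a map out of integral periods is standard. The genuine gap is exactly at the step you yourself defer: forcing every odd zero $q_j$ over $z_1$. With your construction, $f$ is the descent of the developing map $\tilde f(x)=\int_{q_1'}^{x}\omega$, and the antisymmetrized cycle $\overline{q_1'q_j'}-\sigma(\overline{q_1'q_j'})$ has period $2\tilde f(q_j')$, so integrality pins $\tilde f(q_j')$ only to the four two-torsion points of $\mathbb T$, as you note. But this residue in $\frac12\mathbb Z^2/\mathbb Z^2$ is a genuine invariant, not a matter of bookkeeping: any other map $f'$ with $f'^{*}(dz)^2=\psi$ lifts to $\tilde f'=\pm\tilde f+c$ with $c$ two-torsion, and two-torsion translations permute the corners as the Klein four-group (each nontrivial one is a product of two disjoint transpositions), so they can never collect odd zeros lying over two different corners onto a single corner. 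And such configurations do occur with all the listed periods integral: for instance a connected degree-$6$ pillowcase cover with monodromy $(123)(465)$ over $z_1$, $(124)(365)$ over $z_2$, and type $(2,2,2)$ over $z_3,z_4$ has genus $2$, gives a point of $\mathcal Q(1,1,1,1)$ with $k=0$ (so the genericity hypothesis is vacuous), and your own forward argument shows $\phi_1,\dots,\phi_{2g-2+l}\in\mathbb Z^2$ — yet its four simple odd zeros sit over two distinct corners. So under your normalization, integrality does not imply condition (2), and your route cannot be completed as proposed.

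The paper's converse avoids this by a different construction, which is the missing idea: it defines $f$ directly on $C$, with no base point, by $f(z)=\int_{z'}^{\sigma(z')}\omega$ for $z'\in\pi^{-1}(z)$. Path-independence mod $\mathbb Z^2$ follows from integrality of the absolute periods (periods over $\pi^{*}H_1(C;\mathbb Z)$ vanish since $\pi_{*}\omega=0$), the choice of $z'$ only flips the sign, and the crux becomes tautological: for $z=q_j$ the endpoints coincide, so the defining integral is a period of a closed cycle and $f(q_j)=0=z_1$ on the nose. In terms of your map, the paper's $f$ equals $-2\tilde f$ up to a constant, i.e., your $f$ postcomposed with the doubling map of $\mathbb T$, which collapses all four two-torsion points to $0$ — precisely the collapse your argument lacks. (This silently rescales the flat structure: the differential of $z\mapsto\int_{z'}^{\sigma(z')}\omega$ along local sections is $-2\omega$, so this map pulls $(dz)^2$ back to $4\psi$; the paper leaves that lattice normalization, and the verification of (2)--(4), implicit.) In short: your forward half matches the paper, but your converse, at its self-identified crux, needs the paper's antisymmetrized-integral definition rather than the based developing map.
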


\begin{proof}
If such a map $f$ exists, by the above diagram, we know $\pi^{*}\psi = \omega^2$ and $\omega = \tilde{f}^{*}dz$. Then we have 
$$\int_{\gamma_i}\omega = \int_{\gamma_i} \tilde f^{*}dz = \int_{\tilde f_{*}\gamma_i}dz \in \mathbb Z^2$$  
for $1\leq i\leq 2g-2+l$, since $\tilde f_{*}\gamma_i$'s are closed paths on $\mathbb T$. 
Moreover, we have 
$$\int_{\gamma_{2g-2+l+i}} \omega = \int_{\tilde{f}_{*}(\overline{p_i'q_1'})}dz -  \int_{\tilde{f}_{*}\sigma(\overline{p_i'q_1'})}dz
= \int_{\tilde f_{*}(\overline{p_i'p_i''})}dz \in \mathbb Z^2 $$ 
for $1\leq i\leq k$, since $\tilde{f}$ maps $p_i'$ and $p_i''$ to the same point in $\mathbb T$. 

For the other direction, define $$f(z) = \int_{z'}^{\sigma (z')}\omega,$$ 
where $z\in C$ and $z' \in \pi^{-1}(z)$. 
Since $\phi_i \in \mathbb Z^2$ for $1\leq i\leq 2g-2 + l$, modulo $\mathbb Z^2$, $f(z)$ does not depend on the integration path. Modulo $\pm$, $f(z)$ is independent of the choice of $z' \in \pi^{-1}(z)$. Therefore, we obtain an induced map $f: C \rightarrow \PP^1 \cong \mathbb T/\pm$. It is straightforward to check that $f$ satisfies all the desired conditions. 
\end{proof}

Up to a scalar, $\QQ (\mu, \nu)/\mathbb C^{*}$ has dimension equal to $2g - 3 + k + l$ and it naturally maps to $\mathcal M_g$. 
When dim $\QQ (\mu, \nu)/\mathbb C^{*}\geq$ dim $\mathcal M_g$, we prove that $\QQ (\mu, \nu)/\mathbb C^{*}\rightarrow \mathcal M_g$ is dominant.  

\begin{lemma}
\label{dominant}
For $k + l \geq g$, a general genus $g$ curve admits a quadratic differential contained in $\QQ (\mu, \nu)$. 
\end{lemma}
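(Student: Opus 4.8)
The plan is to translate the statement into a question about linear equivalence of divisors and then resolve it with the Abel--Jacobi map. A quadratic differential $\psi$ on a smooth curve $C$ with prescribed zero divisor $D=\sum_{i=1}^k 2m_i p_i+\sum_{j=1}^l (2n_j-1)q_j$ is exactly a nonzero section of $K_C^{\otimes 2}$ with zero divisor $D$, and such a section exists if and only if $\OO(D)\cong K_C^{\otimes 2}$, i.e. $D\sim 2K_C$. So it suffices to produce, on a general curve $C$, distinct points $p_1,\dots,p_k,q_1,\dots,q_l$ with
$$\sum_{i=1}^k 2m_i p_i+\sum_{j=1}^l (2n_j-1)q_j\sim 2K_C,$$
together with the requirement that the resulting $\psi$ not be a global square. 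The non-square condition is automatic once $l\geq 2$, since a square $\omega^2$ has only even order zeros while the $q_j$ carry the odd multiplicities $2n_j-1$; if $l=0$ one notes instead that the square locus is cut out by the extra condition $\OO(\sum m_i p_i)\cong K_C$ and is therefore a proper subvariety of the stratum.

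Next, set $N=k+l$ and let $w=(2m_1,\dots,2m_k,2n_1-1,\dots,2n_l-1)$ be the weight vector, every entry of which is nonzero. I would consider the weighted Abel--Jacobi morphism
$$\alpha\colon C^{N}\longrightarrow \mbox{Pic}^{4g-4}(C),\qquad \alpha(x_1,\dots,x_N)=\OO\Big(\sum_{r=1}^N w_r x_r\Big),$$
and compute its codifferential at a configuration $(x_1,\dots,x_N)$. Under the identification of the cotangent space of $\mbox{Pic}(C)$ with $H^0(C,K_C)$, it is the map $\omega\mapsto (w_r\,\omega(x_r))_r\in\bigoplus_r K_C|_{x_r}$. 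Because every $w_r\neq 0$, this is injective precisely when no nonzero holomorphic differential vanishes at all $N$ points, that is, when $h^0\big(K_C-\sum_r x_r\big)=0$. For $N\geq g$ general points this holds, since $g$ general points impose independent conditions on the $g$-dimensional space $H^0(C,K_C)$. Hence $d\alpha$ is surjective at a general configuration, so $\alpha$ is dominant; as $C^{N}$ is complete its image is closed, whence $\alpha$ is surjective. In particular $2K_C$ lies in the image, so an effective configuration realizing it exists, a priori possibly with coincidences.

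Finally, I would upgrade ``some configuration'' to ``a configuration of distinct points over a general curve.'' Working in the universal family $\mathcal C\to\mathcal M_g$, the relative form of $\alpha$ is fibrewise surjective by the previous step, so the locus $\tilde\Lambda\subset\mathcal C^{N}$ defined by $\sum_r w_r x_r\sim 2K_C$ dominates $\mathcal M_g$ and has dimension $2g-3+N$. A dimension count then controls its intersection with the big diagonal $\Delta$: imposing a coincidence drops the number of moving points, forcing $\dim(\tilde\Lambda\cap\Delta)\leq 2g-4+N$, one less than $\dim\tilde\Lambda$. Thus no dominant component of $\tilde\Lambda$ lies in $\Delta$, and for a general curve $C$ the fibre $\alpha^{-1}(2K_C)$ meets the open locus of distinct points (when $N=g$ this fibre is already finite and disjoint from $\Delta$ for general $C$). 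Any such distinct configuration produces a genuine point of $\QQ (\mu,\nu)$ lying over $C$, which yields dominance of $\QQ (\mu,\nu)/\mathbb C^{*}\to\mathcal M_g$ and hence the lemma.

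The clean heart of the argument is the surjectivity of $d\alpha$, which reduces by Riemann--Roch to $h^0\big(K_C-\sum_r x_r\big)=0$ for $N\geq g$ general points and drives everything else. The step I expect to demand the most care is the last one, passing from surjectivity of $\alpha$ onto $\mbox{Pic}^{4g-4}(C)$ to the existence of a \emph{distinct} configuration over a \emph{general} curve: because $2K_C$ is a distinguished, non-generic point of the Jacobian one cannot simply invoke generic smoothness at $2K_C$, and one must instead verify through the universal-family dimension estimate that $2K_C$ is a regular value for general $C$ and that the diagonal does not absorb the whole fibre. Once this is secured the lemma follows.
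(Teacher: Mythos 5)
Your reduction to $D\sim 2K_C$ and your surjectivity of the weighted Abel--Jacobi map $\alpha\colon C^N\to \mathrm{Pic}^{4g-4}(C)$ are correct, and the route is genuinely different from the paper's. But the final step, which you yourself flag as the delicate one, contains a real gap: the two dimension estimates it rests on are asserted, not proved. For $\dim\tilde\Lambda=2g-3+N$ you only have the \emph{lower} bound: every component of $\alpha^{-1}(2K_C)$ has dimension $\geq N-g$, but an upper bound fails exactly at configurations whose support does not impose independent conditions on $|K_C|$, where $d\alpha$ drops rank; nothing you have written rules out that over a general curve the fibre over the distinguished point $2K_C$ consists entirely of such excess components. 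The diagonal estimate has the same defect in sharper form: merging two points replaces the problem by a weighted Abel--Jacobi map with $N-1$ points, and ``one fewer moving point'' only gives the \emph{expected} fibre dimension $N-1-g$, not an actual bound. Merged problems can be solvable in excess of expectation on \emph{every} curve: with all weights equal to $2$ and $g-1$ points, $\sum 2x_i\sim 2K_C$ is solvable on every curve (effective odd theta-characteristics), even though the map $C^{g-1}\to\mathrm{Pic}$ is not even dominant. So the strict inequality $\dim(\tilde\Lambda\cap\Delta)<\dim\tilde\Lambda$, which is what produces a dominant component with \emph{distinct} points, is precisely the unproven heart of your argument; in particular the parenthetical ``when $N=g$ this fibre is already finite and disjoint from $\Delta$ for general $C$'' assumes the conclusion.

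The paper avoids all of this with a one-stroke enumerative argument: it applies the De Jonqui\`eres formula to the linear system $|2K_C|$ on a general curve in the boundary case $k+l=g$, where the virtual number of divisors with the prescribed multiplicities is the coefficient $\left[R(t)^{g}\right]_{t_{1}^{h_{1}}\cdots t_{m}^{h_{m}}}$ with $R(t)=1+\sum_i a_i^2 t_i$; since $R$ has positive coefficients this number is positive, nonvanishing of the class forces the locus to be nonempty, and the stratification of the $\QQ(\mu,\nu)$'s handles $k+l>g$. The positivity of an explicit class in the symmetric product thus replaces exactly the fibre-dimension control your sketch lacks. A minor further point: your treatment of the non-square condition when $l=0$ is imprecise --- the square locus is not cut out by an ``extra condition'' of positive codimension inside the fibre, but is the union $\bigcup_{\epsilon}\beta^{-1}(K_C\otimes\epsilon)$ over the single translate $\epsilon=0$ among the $2^{2g}$ two-torsion translates, where $\beta(x)=\OO\left(\sum m_i x_i\right)$; to see it is proper you need the other translates to carry nonempty fibres, which does follow from your surjectivity argument applied to $\beta$, so this part is fixable.
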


\begin{proof}
There is a stratification among all $\QQ(\mu, \nu)$'s, so it suffices to verify the boundary case when $k + l = g$. We apply the 
De Jonqui\`{e}res' Formula \cite[VIII \S 5]{ACGH}. Let $a_{1}, \ldots, a_{m}$ be distinct integers such that $a_{i}$ appears $h_{i}$ times in the partition $(\mu, \nu)$ of $4g-4$. We have $\sum_{i=1}^{m}h_{i} = k+l = g$ and $\sum_{i=1}^{m}h_{i}a_{i} = 4g-4$. 
Define $R(t) = 1+\sum_{i=1}^{m}a_{i}^{2}t_{i}$. On a general genus $g$ curve, the virtual number of quadratic differentials (up to a scalar) that have $h_i$ zeros of multiplicity $a_i$ is 
$$ \left[R(t)^{g}\right]_{t_{1}^{h_{1}}\cdots t_{m}^{h_{m}}}. $$ 
Note that $R(t)$ has positive coefficients, hence the virtual number is also positive, which implies the existence of such 
quadratic differentials (possibly infinitely many if the curve is special).   
\end{proof}

\begin{corollary}
\label{dense}
For $k + l \geq g$, the union of genus $g$ curves that admit covers in $Cov(\mu, \nu)$ forms a Zariski dense subset of
$\mathcal M_g$.  
\end{corollary}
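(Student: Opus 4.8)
The plan is to combine the dominance statement of Lemma~\ref{dominant} with the period-coordinate description of covers in Lemma~\ref{EOQ}, and then to upgrade the discrete ``integral period'' locus to a genuinely dense one by a rescaling trick. Write $q\colon \QQ(\mu,\nu)\to \mathcal M_g$ for the morphism forgetting the quadratic differential; it factors through $\QQ(\mu,\nu)/\mathbb C^{*}$, which dominates $\mathcal M_g$ by Lemma~\ref{dominant} when $k+l\geq g$, so $q$ is dominant and (by Chevalley) its image contains a Zariski open dense subset $U\subseteq \mathcal M_g$. By Lemma~\ref{EOQ} the set of genus $g$ curves admitting a cover in $Cov(\mu,\nu)$ is exactly $q(W)$, where
$$ W = \{(C,\psi)\in \QQ(\mu,\nu)\ :\ \phi_i\in \mathbb Z^2\ \text{for}\ 1\leq i\leq 2g-2+l\} $$
is the locus on which the first $2g-2+l$ period coordinates are Gaussian integers, the remaining $k$ coordinates being free (subject to the genericity already built into Lemma~\ref{EOQ}). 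Hence it suffices to prove that $q(W)$ is Zariski dense in $\mathcal M_g$.

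The obstacle is that $W$ is \emph{not} dense in $\QQ(\mu,\nu)$: in period coordinates it is a countable union of $k$-dimensional slices, because $\mathbb Z^2$ is discrete. The remedy is to relax the first coordinates to Gaussian rationals. Set
$$ W_{\mathbb Q} = \{(C,\psi)\in \QQ(\mu,\nu)\ :\ \phi_i\in \mathbb Q(\sqrt{-1})\ \text{for}\ 1\leq i\leq 2g-2+l\}. $$
First I would verify $q(W)=q(W_{\mathbb Q})$. The inclusion $W\subseteq W_{\mathbb Q}$ is clear. Conversely, given $(C,\psi)\in W_{\mathbb Q}$, clear denominators: choosing $N$ with $N\phi_i\in\mathbb Z^2$ for all $i\leq 2g-2+l$ and replacing $\psi$ by $N^2\psi$ scales $\omega$ by $N$, hence every period by $N$, so $(C,N^2\psi)\in W$ while $q(C,N^2\psi)=C=q(C,\psi)$. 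The genericity on the last $k$ coordinates survives for generic choices (the differences of the free coordinates are irrational, so they cannot enter $\tfrac1N\mathbb Z^2$), which is harmless for a density statement. Thus $q(W)=q(W_{\mathbb Q})$.

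Next I would observe that $W_{\mathbb Q}$ is dense in $\QQ(\mu,\nu)$ for the analytic topology: in the period chart of Lemma~\ref{EOQ} it corresponds to $(\mathbb Q(\sqrt{-1}))^{2g-2+l}\times \mathbb C^{k}$, and since $\mathbb Q(\sqrt{-1})$ is dense in $\mathbb C$ this product is dense in $\mathbb C^{n}$; as such charts cover $\QQ(\mu,\nu)$, density is global. Because $q$ is continuous for the analytic topology, $\overline{q(W_{\mathbb Q})}\supseteq q(\overline{W_{\mathbb Q}})=q(\QQ(\mu,\nu))\supseteq U$. A Zariski open dense subset is analytically dense, so $\overline{q(W_{\mathbb Q})}=\mathcal M_g$ in the analytic topology, whence $q(W)=q(W_{\mathbb Q})$ is analytically dense and a fortiori Zariski dense in $\mathcal M_g$. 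This is exactly the assertion of the corollary.

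I expect the genuine difficulty to be the passage from the discrete locus $W$ to the dense locus $W_{\mathbb Q}$: the equality $q(W)=q(W_{\mathbb Q})$ is what makes density available, and it rests on the fact that rescaling the differential moves the point within $\QQ(\mu,\nu)$ while fixing the underlying curve in $\mathcal M_g$. The two remaining ingredients, namely the dominance of $q$ and the identification of covers with integral period points, are furnished directly by Lemmas~\ref{dominant} and~\ref{EOQ}.
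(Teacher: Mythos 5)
Your proposal is correct and follows essentially the same route as the paper: the paper's proof is exactly the two ingredients you use, namely that covers in $Cov(\mu,\nu)$ correspond to (shifted) lattice points under the period map of Lemma~\ref{EOQ}, so their union is dense in $\QQ(\mu,\nu)$, combined with the dominance of $\QQ(\mu,\nu)/\mathbb C^{*}\rightarrow\mathcal M_g$ from Lemma~\ref{dominant}. Your rescaling step $q(W)=q(W_{\mathbb Q})$ (clearing denominators via $\psi\mapsto N^{2}\psi$, which changes the degree of the cover but not the underlying curve) is precisely the justification implicit in the paper's one-line density assertion, since $Cov(\mu,\nu)$ allows arbitrary even degree; you have merely spelled it out.
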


\begin{proof}
Since covers in $Cov(\mu, \nu)$ correspond to certain shift of lattice points under the period map, their union is Zariski dense in $\QQ (\mu, \nu)$. Now the conclusion follows from Lemma~\ref{dominant}. 
\end{proof}

Let $\mu$ be null and $\nu = (2n_1-1,\ldots, 2n_l-1)$ a partition of $4g-4$. Let $c_1$ be the conjugacy class $(2n_1 +1,\ldots, 2n_l +1, 
2,\ldots, 2)$ and $c_2, c_3, c_4$ the same conjugacy class $(2, \ldots, 2)$ of $S_d$, where $d$ is even. Let ${\bf c_{\nu}} = (c_1, c_2, c_3, c_4)$ 
denote the ramification profile. By the notation in the introduction, the set
$Cov_d({\bf c_{\nu}})$ parameterizes degree $d$, genus $g$ connected covers of $\PP^1$ with four fixed branch points and the ramification profile ${\bf c_{\nu}}$. 
Let $\OO \subset Cov_d({\bf c_{\nu}})$ denote an orbit of the action in Theorem~\ref{monodromy} and $Z_{\OO}$ the corresponding irreducible component of the Hurwitz space $\HH_d({\bf c_{\nu}})$. 

\begin{corollary}
\label{bounding}
For $ l \geq g$, the union of irreducible components of $\HH_d({\bf c_{\nu}})$ for all $d$ maps to a Zariski dense subset 
in $\mathcal M_g$. Let $s_{\nu}$ be the limit (if exists) of their slopes as $d$ approaches infinity. 
For an effective divisor $D$ on $\Mg$, it has slope $s(D) \geq s_{\nu}$. 
\end{corollary}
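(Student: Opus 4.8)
The plan is to combine the density result of Corollary~\ref{dense} with the slope-comparison principle already used in the discussion of effective divisors. Since $\mu$ is null we have $k=0$, so the hypothesis $l\ge g$ coincides with the hypothesis $k+l\ge g$ of Corollary~\ref{dense}. For every admissible even $d$ the covers parameterized by $\HH_d({\bf c_{\nu}})$ over the pillow-case branch points are exactly the elements of $Cov(\mu,\nu)$ of degree $d$ (with $\mu$ null the differential $f^{*}(dz)^2$ has odd-order zeros at the $q_j$ and is therefore never a global square, while the ramification over $z_1,\dots,z_4$ is precisely ${\bf c_{\nu}}$). Hence $\bigcup_d h(\HH_d({\bf c_{\nu}}))$ contains the locus of genus $g$ curves admitting covers in $Cov(\mu,\nu)$, which by Corollary~\ref{dense} is Zariski dense in $\mathcal M_g$; this gives the first assertion.

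For the inequality, write $D\equiv a\lambda-\sum_i b_i\delta_i$ with $a,b_i>0$ and put $b=\min_i b_i$, so $s(D)=a/b$. Let $B=\ZZ$ be a Hurwitz component. Its image $h(B)$ is a genuine curve because $\deg h^{*}\lambda|_{B}>0$ by the formula in the proof of Theorem~\ref{slope}, and $B$ is not contained in any boundary divisor since its general cover has smooth domain; thus $\deg h^{*}\delta_i|_{B}\ge 0$ for all $i$. It follows that
$$ \deg h^{*}D|_{B}=a\,\deg h^{*}\lambda|_{B}-\sum_i b_i\,\deg h^{*}\delta_i|_{B}\ \le\ \deg h^{*}\lambda|_{B}\bigl(a-b\,s(B)\bigr). $$
If $s(B)>s(D)$ the right-hand side is negative, so $\deg h^{*}D|_{B}<0$, and since $D$ is effective this forces $h(B)\subseteq D$.

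Now suppose for contradiction that $s(D)<s_{\nu}$. As the slopes of the components converge to $s_{\nu}$, all but finitely many components $B$ satisfy $s(B)>s(D)$, and each such $h(B)$ lies in $D$ by the previous step. Removing the finitely many exceptional curves from the Zariski dense union $\bigcup_d h(\HH_d({\bf c_{\nu}}))$ leaves a still Zariski dense union: since $\mathcal M_g$ is irreducible of dimension $3g-3\ge 2$, it cannot equal the union of one proper closed subset together with finitely many curves. Then $D$ would contain a Zariski dense subset of $\mathcal M_g$, which is impossible for a divisor; hence $s(D)\ge s_{\nu}$. The point demanding the most care is the finiteness of the exceptional set: it relies on reading the limit $s_{\nu}$ as being approached by the slopes of \emph{all} components (so that only boundedly many $d$, each contributing finitely many components, can fall below $s(D)$), rather than merely along a subsequence.
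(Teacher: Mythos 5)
Your proposal is correct and follows essentially the same route as the paper: density via Corollary~\ref{dense}, the slope-comparison argument showing $s(B)>s(D)$ forces $h(B)\subseteq D$, and the per-degree finiteness of components to pass to the limit; the paper merely runs this directly (infinitely many components not contained in $D$, each giving $s(D)\geq s(\ZZ)$) rather than by contradiction. Your explicit flagging of the needed reading of the limit $s_{\nu}$ --- over \emph{all} components rather than a subsequence --- matches the paper's implicit assumption.
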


\begin{proof}
For $l\geq g$, the density result follows from Corollary~\ref{dense}. If $D$ is an effective divisor on $\Mg$, there exist infinitely many Hurwitz components $Z_{\OO}$ not entirely contained in $D$. We have the intersection number $D\ldotp \ZZ \geq 0$, hence $s(D) \geq s(\ZZ)$. Taking the limit when $d$ goes to infinity, 
we get $s(D) \geq s_{\nu}$. 
\end{proof}

In order to evaluate $s_{\nu}$, we first simplify the slope formula for components of $\HH_d({\bf c_{\nu}})$. 

\begin{lemma}
\label{notail}
Let $\pi_{0}: C_0 \rightarrow Q_{12}\cup_{z_0}Q_{34}$ be a degenerate cover parameterized in $\overline{\HH}_d({\bf c_{\nu}})$, where $Q_{12}, Q_{34}$ are two $\PP^1$'s containing the branch points $z_1, z_2$ and $z_3, z_4$, respectively, and they meet at the node $z_0$. Then $C_0$ does not have a rational tail. 
\end{lemma}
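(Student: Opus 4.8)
The plan is to follow the strategy of Lemma~\ref{tail}, but now exploiting the special feature of the profile ${\bf c_{\nu}}$ that \emph{all four} monodromy permutations are fixed-point-free. First I would recall the structure of $C_0$: since the target degenerates only over the node, every node of $C_0$ lies above $z_0$, and $C_0 = C_{12}\cup C_{34}$, where $C_{12}\to Q_{12}$ and $C_{34}\to Q_{34}$ are (possibly disconnected) admissible covers that are smooth away from the gluing over $z_0$. Hence a rational tail $R$ is necessarily a connected component of $C_{12}$ or of $C_{34}$, i.e.\ an orbit $O\subseteq\{1,\dots,d\}$ of $\langle\gamma_1,\gamma_2,\gamma_0\rangle$ or of $\langle\gamma_3,\gamma_4,\gamma_0^{-1}\rangle$ (with $\gamma_0=(\gamma_1\gamma_2)^{-1}=\gamma_3\gamma_4$), which has genus $0$ and meets the opposite side at exactly one node. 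I would translate this last geometric condition into the combinatorial statement that $\gamma_0$ restricted to $O$ is a single cycle of length $e:=|O|$.

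Next I would record the arithmetic input. Since $c_2=c_3=c_4=(2,\dots,2)$, the permutations $\gamma_2,\gamma_3,\gamma_4$ are fixed-point-free involutions; and since $c_1=(2n_1+1,\dots,2n_l+1,2,\dots,2)$ consists only of cycles of length $\geq 2$, the permutation $\gamma_1$ is fixed-point-free as well. These properties pass to the invariant set $O$: a fixed-point-free involution restricted to $O$ has exactly $e/2$ cycles (so $e$ is even), while $\gamma_1|_O$, having all cycles of length $\geq 2$, has at most $e/2$ cycles.

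Then I would run Riemann--Hurwitz on the degree-$e$ cover $R\to\PP^1$ branched at its three marked points, using that a branch point whose fibre monodromy has $r$ cycles contributes $e-r$ to the ramification, so that $2g_R-2 = e-(r_1+r_2+r_0)$ for the three cycle counts. For $R\subset C_{34}$ the counts are $e/2,\ e/2,\ 1$, giving $2g_R-2 = e-(e+1) = -1$, which is impossible; so that side carries no rational tail (indeed no such connected component at all). For $R\subset C_{12}$, writing $r\leq e/2$ for the number of cycles of $\gamma_1|_O$, the counts $r,\ e/2,\ 1$ yield $2g_R = e/2 - r + 1$, so $g_R=0$ would force $r=e/2+1>e/2$, contradicting $r\leq e/2$. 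In either case no rational tail can occur.

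I expect the only delicate point to be the reduction carried out in the first paragraph: identifying correctly that a rational tail must be a single connected component meeting the complement at exactly one node over $z_0$, so that the geometric hypothesis becomes ``$\gamma_0|_O$ is one full cycle.'' Once that dictionary is fixed, the remainder is a short Riemann--Hurwitz count whose decisive feature is merely that fixed-point-freeness caps every cycle number by $e/2$. The same computation disposes of the degenerations $z_4\to z_1$ and $z_4\to z_2$ verbatim, since, given $c_2=c_3=c_4$, the pairing of ramification classes on the two components is identical up to relabeling.
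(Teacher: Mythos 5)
Your proof is correct and takes essentially the same route as the paper: identify a rational tail as a component meeting the rest of $C_0$ at a single node, hence totally ramified over the node $z_0$, and run Riemann--Hurwitz on the two possible sides to reach a contradiction. The only cosmetic difference is that you cap all cycle counts by $e/2$ via fixed-point-freeness (which also gives the parity obstruction $2g_R-2=-1$ on the $C_{34}$ side for any genus), whereas the paper substitutes the explicit profile at $z_1$ and derives the impossible identity $\sum_{j}(2n_{a_j}-1)=-2$; the two computations are equivalent.
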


\begin{proof}
We prove by contradiction. If $C'$ is a rational tail of $C_0$, let $z' = C' \cap \overline{C_0\backslash C'}$ and $d'$ the degree of $\pi_0$ restricted to $C'$. Then $z'$ is the unique pre-image of the node $z_0$ in $C'$. 

If $C'$ maps to $Q_{34}$, it is branched at $z_1, z_2$ of ramification type $(2,\ldots, 2)$ and at $z_0$ of type $(d')$. By the Riemann-Hurwitz formula, we have 
$$2d' - 2 = (d' - 1) + \frac{d'}{2} + \frac{d'}{2}, $$
which is impossible. 

If $C'$ maps to $Q_{12}$, it is branched at $z_2$ of ramification type $(2, \ldots, 2)$, at $z_0$ of type $(d')$ and at $z_{1}$ of type 
$(2n_{a_1} + 1, \ldots, 2n_{a_i} + 1, 2, \ldots, 2)$. By Riemann-Hurwitz, we have 
$$2d' - 2 = (d' - 1) + \frac{d'}{2} + \sum_{j=1}^i 2n_{a_j} + \frac{d' - \sum_{j=1}^i (2n_{a_j}+1) }{2}. $$
After simplifying, we get 
$$\sum_{j=1}^{i} (2n_{a_j} - 1) = -2. $$
Since $n_i \geq 1$ in the partition $\nu = (2n_1-1, \ldots, 2n_l-1)$, the last equality is impossible. 
\end{proof}

\begin{corollary}
\label{slopenu}
Let $d_i = 2n_i - 1$ in the partition $\nu = (2n_1-1, \ldots, 2n_l-1)$. Then the slope of a component $\ZZ$ of $\overline{\HH}_d({\bf c_{\nu}})$ equals
$$ s(\ZZ) =   \frac{12}{\displaystyle 1 + \frac{1}{4}\Big(\sum_{i=1}^l \frac{d_i(d_i+4)}{d_i+2}\Big)\frac{|\OO|}{\delta_{\OO}}}. $$
\end{corollary}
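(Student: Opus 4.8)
The plan is to begin from the general slope formula in Theorem~\ref{slope} and specialize it to the profile ${\bf c_{\nu}}$, the only genuine geometric input being Lemma~\ref{notail}. Since Lemma~\ref{notail} guarantees that no degenerate cover $C_0$ in $\overline{\HH}_d({\bf c_{\nu}})$ acquires a rational tail, every node of $C_0$ maps to a node of the stabilization $C_0^{st}$, so $\delta(q_i)=1$ for each node. Consequently $\delta_i({\bf r})=\delta'_i({\bf r})$ for every $i$ and every ${\bf r}\in\OO$, and summing over the orbit gives $\delta_{\OO}=\delta'_{\OO}$. Substituting this identity into the formula of Theorem~\ref{slope} and dividing top and bottom by $\delta_{\OO}$, I would reduce the claim to the identity
$$ s(\ZZ) = \frac{12}{1 + D\,\dfrac{|\OO|}{\delta_{\OO}}}, \qquad D := d - \sum_{i=1}^4\sum_{j=1}^{k_i}\frac{1}{a_{i,j}}, $$
so that the entire problem becomes the evaluation of the single number $D$.

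To compute $D$, I would read off the cycle structure of ${\bf c_{\nu}}$. The class $c_1=(2n_1+1,\dots,2n_l+1,2,\dots,2)$ contributes the cycles of length $2n_j+1$ together with $N_1$ transpositions, where $N_1$ is fixed by $\sum_{j=1}^l(2n_j+1)+2N_1=d$; each of $c_2,c_3,c_4=(2,\dots,2)$ contributes $d/2$ transpositions, using that $d$ is even. Hence
$$ \sum_{i=1}^4\sum_{j}\frac{1}{a_{i,j}} = \sum_{j=1}^l\frac{1}{2n_j+1} + \frac{N_1}{2} + 3\cdot\frac{d}{4}. $$
Substituting $N_1=\tfrac12\big(d-\sum_j(2n_j+1)\big)$ and collecting terms, the $d$-dependent pieces cancel against the $d$ in $D$, leaving $D=\sum_{j=1}^l\big(\tfrac{2n_j+1}{4}-\tfrac{1}{2n_j+1}\big)$.

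Finally, writing $d_j=2n_j-1$ so that $2n_j+1=d_j+2$, each summand simplifies as
$$ \frac{d_j+2}{4}-\frac{1}{d_j+2} = \frac{(d_j+2)^2-4}{4(d_j+2)} = \frac{d_j(d_j+4)}{4(d_j+2)}, $$
whence $D=\tfrac14\sum_{j=1}^l\frac{d_j(d_j+4)}{d_j+2}$; plugging this into the reduced formula yields exactly the stated expression. The computation is essentially bookkeeping, so the one step requiring real care — and the only place where the geometry enters — is the appeal to Lemma~\ref{notail} that forces $\delta_{\OO}=\delta'_{\OO}$; after that, the main thing to track correctly is the transposition count $N_1$ in $c_1$ and the fact that all cycles of $c_2,c_3,c_4$ have length $2$.
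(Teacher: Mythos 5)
Your proposal is correct and follows exactly the paper's route: invoke Lemma~\ref{notail} to conclude every node of a degenerate cover survives in the stabilization, hence $\delta_{\OO}=\delta'_{\OO}$, and then substitute the profile ${\bf c_{\nu}}$ into Theorem~\ref{slope}. The only difference is that you spell out the arithmetic the paper leaves implicit (the transposition count $N_1$, the cancellation of the $d$-terms, and the simplification $\frac{d_j+2}{4}-\frac{1}{d_j+2}=\frac{d_j(d_j+4)}{4(d_j+2)}$), and that computation is carried out correctly.
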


\begin{proof}
By Lemma~\ref{notail}, a node of a degenerate cover in $\overline{\HH}_d({\bf c_{\nu}})$ maps to a node of its stabilization. By definition, we have $\delta_{\OO} = \delta'_{\OO}$ for any component $\ZZ$ of $\overline{\HH}_d({\bf c_{\nu}})$. Plugging the ramification profile ${\bf c_{\nu}}$ in 
Theorem~\ref{slope}, we thus obtain the desired expression for $s(\ZZ)$. 
\end{proof}

For the partition $\nu = (d_1,\ldots, d_l)$, where $d_i = 2n_i - 1$, define 
$$\kappa_{\nu} = \displaystyle \frac{1}{24}\sum_{i=1}^l \frac{d_i(d_i+4)}{d_i+2}.$$ 

For the canonical double cover $\tilde C\rightarrow C$, the cohomology $H^{1}(\tilde C; \mathbb R)$ splits into the direct sum 
$H^{1}_+(\tilde C; \mathbb R)\oplus H^{1}_{-}(\tilde C; \mathbb R)$ of the invariant and anti-invariant subspaces under the action 
$\sigma^{*}$, where $\sigma$ is the involution of the double cover. We know that $H^{1}_+(\tilde C; \mathbb R)\cong \sigma^{*}
H^{1}(C; \mathbb R)$ has dimension $2g$. Let $H^1_+$ be the invariant subbundle of the Hodge bundle on the moduli space $\QQ (\nu)$
whose fiber over $(C, \psi)$ is naturally identified as $H^{1}_+(\tilde C; \mathbb R)$. Let $\lambda_1^+\geq \ldots \geq \lambda_g^+$ be the non-negative Laypunov exponents of $H^1_+$ over $\QQ (\nu)$ of quadratic differentials under the Teichm\"{u}ller geodesic flow. Denote their sum by $$L_{\nu} = \lambda_1^+ + \ldots + \lambda_g^+.$$

Let $c_{\nu}$ be the Siegel-Veech area constant of the stratum $\QQ (\nu)$ satisfying 
$c_{\nu} = \frac{\pi^2}{3} c_{area}(\QQ (\nu))$ in the context of \cite{EKZ1}. It measures the average number of 
horizontal cylinders with weight ``height/length'' on a random flat surface parameterized in $\QQ (\nu)$. By \cite[Theorem 2.2]{EKZ1}, 
we know 
$$ L_{\nu} = \kappa_{\nu} + c_{\nu}. $$

Let $N_{d,\nu}= |Cov_d({\bf c_{\nu}})|$ denote the cardinality of the covering set $Cov_d({\bf c_{\nu}})$. Then $N_{d,\nu}$ is equal to the degree of the finite morphism $\overline{\HH}_d({\bf c_{\nu}})\rightarrow \Mof$. Let $\delta_{d, \nu}$ be the sum of $\delta_{\OO}$ ranging over all the orbits $\OO$ 
of $Cov_d({\bf c_{\nu}})$. Recall that the covers in $Cov(\nu)$ correspond to lattice points of $\QQ (\nu)$. During the degeneration of a smooth cover, a horizontal cylinder of height 1 and length $k$ on its square-tiled surface model shrinks to a node of weight $\frac{1}{k}$. By the same argument as in \cite[Appendix A]{C2}, we have 
$$ c_{\nu} = \lim_{d\to \infty} \frac{\delta_{d, \nu}}{6N_{d,\nu}}. $$
The coefficient 6 at the bottom arises by the following reason. First, there are three directions of degeneration 
for a cover in $Cov(\nu)$. To get $\delta_{d,\nu}$, we enumerate all the weighted nodes appearing in the process. But the Siegel-Veech constant only counts those arising in the horizontal direction. Hence, we need to correct the relation by a factor $\frac{1}{3}$. In addition, the orbifold $\PP^1$ as a pillow case is tiled by a white square and a black one. If a node of a degenerate cover is formed by shrinking a horizontal cylinder of height 1 and length $k$ in our setting, it gives rise to a horizontal cylinder consisting of $k$ pairs of squares with height 1 and total length $2k$ in the context of \cite{EKZ1} and \cite{FMZ}. So we need another factor $\frac{1}{2}$. 

The picture below shows $\PP^1$ as a pillow case as well as a horizontal cylinder consisting of $k$ pairs of white and black squares on a covering square-tiled surface. The horizontal vanishing cycle $\alpha$ shrinks to a node that separates $z_1, z_2$ on one rational component and $z_3, z_4$ on the other in the degeneration process. The horizontal cylinder gives rise to a node of weight $\frac{1}{k}$ for enumerating $\delta$. The reader can refer to \cite{FMZ} for a detailed illustration on square-tiled surfaces of this type.  

\begin{figure}[H]
    \centering
    \psfrag{1}{$z_{1}$}
    \psfrag{2}{$z_{2}$}
    \psfrag{3}{$z_{3}$}
    \psfrag{4}{$z_{4}$}
    \psfrag{a}{$1$}
    \psfrag{b}{$k$}
    \psfrag{f}{$\alpha$}
    \includegraphics[scale=0.5]{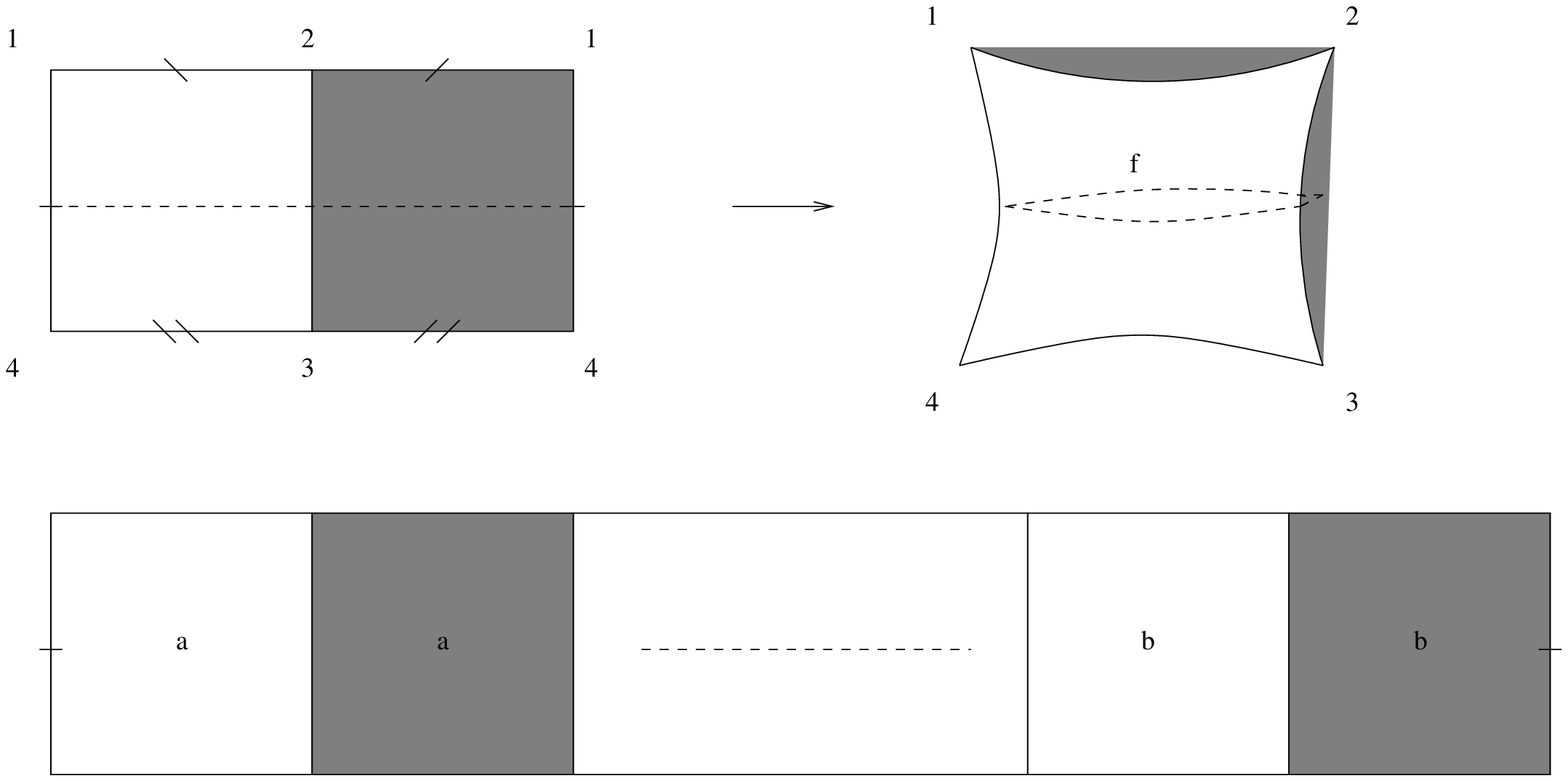}
    \end{figure}

Combining the above results, we derive a relation among the slope, the Siegel-Veech constant and the sum of Lyapunov exponents as follows. 

\begin{theorem}
\label{relation}
For a partition $\nu =  (d_1,\ldots, d_l)$ of $4g-4$ into odd parts, let $s_{\nu}$ be the limit of $s(\overline{\HH}_d({\bf c_{\nu}}))$ 
as $d$ approaches infinity. Then we have $$ s_{\nu} = \frac{12c_{\nu}}{L_{\nu}}. $$  
\end{theorem}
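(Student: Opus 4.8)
The plan is to compute the slope of the full (possibly reducible) Hurwitz space $\overline{\HH}_d({\bf c_\nu})$ as the ratio of total intersection degrees summed over its irreducible components, and then to pass to the limit $d\to\infty$ using the two external inputs already recorded in the excerpt: the Siegel--Veech identity $c_\nu = \lim_{d\to\infty}\delta_{d,\nu}/(6N_{d,\nu})$ and the Eskin--Kontsevich--Zorich formula $L_\nu = \kappa_\nu + c_\nu$. Writing $\OO$ for the orbits indexing the components $\ZZ$ of $\overline{\HH}_d({\bf c_\nu})$, I set
$$s(\overline{\HH}_d({\bf c_\nu})) = \frac{\sum_\OO \mbox{deg}\ h^{*}\delta|_{\ZZ}}{\sum_\OO \mbox{deg}\ h^{*}\lambda|_{\ZZ}}.$$
By Lemma~\ref{notail} and Corollary~\ref{slopenu}, for the profile ${\bf c_\nu}$ one has $\delta_\OO = \delta'_\OO$, so the computations in the proof of Theorem~\ref{slope} give $\mbox{deg}\ h^{*}\delta|_{\ZZ} = \delta_\OO$ and $\mbox{deg}\ h^{*}\lambda|_{\ZZ} = \frac{1}{12}\big(\delta_\OO + (d - \sum_{i,j}\frac{1}{a_{i,j}})|\OO|\big)$ for each component.

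The key computational step is to evaluate $d - \sum_{i,j}\frac{1}{a_{i,j}}$ for ${\bf c_\nu}$ and to recognize it as $6\kappa_\nu$. First I would read off the cycle lengths: $c_1$ consists of the odd cycles $d_i+2$ for $1\le i\le l$ together with $\frac{1}{2}\big(d-(4g-4)-2l\big)$ transpositions, while each of $c_2, c_3, c_4$ is a product of $d/2$ transpositions. Summing the reciprocals of all cycle lengths and using $\sum_i d_i = 4g-4$, the terms linear in $d$ cancel, leaving $d-\sum_{i,j}\frac{1}{a_{i,j}} = \frac{4g-4+2l}{4} - \sum_{i=1}^l \frac{1}{d_i+2}$, which is independent of $d$. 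The algebraic identity $\frac{d_i(d_i+4)}{d_i+2} = (d_i+2) - \frac{4}{d_i+2}$ then identifies this quantity with $\frac{1}{4}\sum_i \frac{d_i(d_i+4)}{d_i+2} = 6\kappa_\nu$, in agreement with Corollary~\ref{slopenu}.

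Summing over the orbits and using $\sum_\OO \delta_\OO = \delta_{d,\nu}$ together with $\sum_\OO |\OO| = N_{d,\nu}$, I obtain
$$s(\overline{\HH}_d({\bf c_\nu})) = \frac{12\,\delta_{d,\nu}}{\delta_{d,\nu} + 6\kappa_\nu N_{d,\nu}} = \frac{12\,(\delta_{d,\nu}/6N_{d,\nu})}{(\delta_{d,\nu}/6N_{d,\nu}) + \kappa_\nu}.$$
Letting $d\to\infty$ and substituting the Siegel--Veech limit $\delta_{d,\nu}/6N_{d,\nu}\to c_\nu$ gives $s_\nu = 12c_\nu/(\kappa_\nu + c_\nu)$, and the relation $L_\nu = \kappa_\nu + c_\nu$ finishes the proof. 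The content here is really the assembly of three independently established facts rather than a new estimate; the one genuine computation, and the only place an error could hide, is the cancellation of the $d$-linear terms that makes $d-\sum_{i,j}\frac{1}{a_{i,j}}$ a $d$-independent constant equal to $6\kappa_\nu$ --- this is exactly what lets $\kappa_\nu$ survive the limit while the entire $d$-dependence is carried by $\delta_{d,\nu}/N_{d,\nu}$.
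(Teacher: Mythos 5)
Your proposal is correct and follows essentially the same route as the paper: combine Corollary~\ref{slopenu} (equivalently, $\delta_{\OO}=\delta'_{\OO}$ via Lemma~\ref{notail} plugged into Theorem~\ref{slope}) with the Siegel--Veech limit $c_{\nu}=\lim_{d\to\infty}\delta_{d,\nu}/(6N_{d,\nu})$ and the Eskin--Kontsevich--Zorich formula $L_{\nu}=\kappa_{\nu}+c_{\nu}$, summing degrees over the orbits so that $\sum_{\OO}\delta_{\OO}=\delta_{d,\nu}$ and $\sum_{\OO}|\OO|=N_{d,\nu}$. Your explicit verification that $d-\sum_{i,j}\frac{1}{a_{i,j}}=\frac{1}{4}\sum_{i=1}^{l}\frac{d_i(d_i+4)}{d_i+2}=6\kappa_{\nu}$, with the $d$-linear terms cancelling, is exactly the computation the paper leaves implicit in the phrase ``plugging the ramification profile ${\bf c_{\nu}}$ in Theorem~\ref{slope},'' and it checks out.
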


\begin{proof}
Applying Corollary~\ref{slopenu} to $\overline{\HH}_d({\bf c_{\nu}})$, we have 
$$ s(\overline{\HH}_d({\bf c_{\nu}})) = \frac{12}{\displaystyle 1 + \kappa_{\nu}\frac{6N_{d,\nu}}{\delta_{d,\nu}}}. $$
Since $ c_{\nu} = \lim\limits_{d\to \infty} \frac{\delta_{d, \nu}}{6N_{d,\nu}}$ and $ L_{\nu} = \kappa_{\nu} + c_{\nu}$, 
we obtain that 
$$ s_{\nu} = \lim_{d\to\infty} s(\overline{\HH}_d({\bf c_{\nu}})) = \frac{12}{ \displaystyle 1+\frac{\kappa_{\nu}}{c_{\nu}}} = \frac{12c_{\nu}}{L_{\nu}}. $$
\end{proof}

For SL$(2, \RR)$ invariant non-hyperelliptic strata $\tilde{\mathcal M}$ in the moduli space of Abelian differentials, Eskin and Zorich obtained strong numerical evidence, which predicts that their Siegel-Veech area constants $c(\tilde{\mathcal M})$ approach 2 as $g$ goes to infinity. 
Let $\mathcal M$ be an SL$(2, \RR)$ invariant stratum in the moduli space of quadratic differentials. Via the canonical double cover construction, we can associate $\mathcal M$ an invariant stratum $\tilde{\mathcal M}$ in the corresponding space of Abelian differentials. Their Siegel-Veech area constants satisfy the relation 
$$ c(\tilde{\mathcal M}) = 2 c(\mathcal M). $$
Therefore, we expect that 
$$ \lim\limits_{g\to \infty} c_{\nu} = 1. $$
Combining this with Corollary~\ref{bounding} and Theorem~\ref{relation}, a lower bound for slopes of effective divisors on $\Mg$ 
would be arbitrarily close to 
$$ s = \frac{12}{1 + \kappa_{\nu}} = \frac{288}{24+ \displaystyle \sum_{i=1}^l\frac{d_i(d_i+4)}{d_i+2}}$$
for $g \gg 0$, where $(d_1,\ldots, d_l)$ is a partition of $4g-4$ into odd parts and $l \geq g$. The partition 
$(1,\ldots, 1, 3g-3)$ for $l = g$ maximizes the bound as 
$$s\sim \frac{432}{7g} $$
for $g\gg 0$. 

A slightly better lower bound $\sim \frac{576}{5g}$ was first found in \cite{HM1} and recently recovered in \cite{C2}. To the author's best knowledge, all known effective divisors on $\Mg$ have slope bigger than 6. On the other hand, all known lower bounds for slopes of effective divisors grow like $O(1/g)$ as $g$ goes to infinity. It is still an open problem for understanding the effective cone of 
$\Mg$ for large $g$.  

\noindent {\bf Acknowledgements.} I sincerely thank Alex Eskin and Anton Zorich for suggestions and help on cyclic covers, Teichm\"{u}ller curves and Laypunov exponents.

\end{document}